\newcommand{\bx}{\boldsymbol{x}}
\newcommand{\bz}{\boldsymbol{z}}
\newcommand{\blambda}{\boldsymbol{\lambda}}
\newcommand{\bQ}{\boldsymbol{Q}}
\newcommand{\bd}{\boldsymbol{d}}
\newcommand{\bff}{\boldsymbol{f}}
\newcommand{\bg}{\boldsymbol{g}}
\newcommand{\bA}{\boldsymbol{A}}
\newcommand{\bH}{\boldsymbol{H}}
\newcommand{\st}{\mathop{\text{\normalfont s.t.}}}
\newcommand{\diag}{\mathop{\text{\normalfont diag}}}
\newcommand{\interior}{\mathop{\text{\normalfont interior}}}
\newtheorem{theorem}{Theorem}
\newtheorem{proposition}{Proposition}
\newtheorem{lemma}{Lemma}
\newtheorem{definition}{Definition}
\newtheorem{assumption}{Assumption}
\newif\iffull
\title{\LARGE Overlapping Schwarz Decomposition for Constrained Quadratic Programs}
\author{Sungho Shin, Mihai Anitescu, Victor M. Zavala
  \thanks{S. Shin is with the Department of Chemical and Biological Engineering, University of Wisconsin-Madison, Madison, WI 53706 USA (e-mail: sungho.shin@wisc.edu)}
  \thanks{M. Anitescu is with the Mathematics and Computer Science Division, Argonne National Laboratory, Lemont, IL 60439, USA, and also with the Department of Statistics, University of Chicago, Chicago, IL 60637, USA (e-mail: anitescu@mcs.anl.gov)}
  \thanks{V. M. Zavala is with the Department of Chemical and Biological Engineering, University of Wisconsin-Madison, Madison, WI 53706 USA and with the Mathematics and Computer Science Division, Argonne National Laboratory, Lemont, IL 60439, USA (e-mail: victor.zavala@wisc.edu)}}
\begin{document}

\maketitle
\thispagestyle{empty}
\pagestyle{empty}

\begin{abstract}
  We present an overlapping Schwarz decomposition algorithm for constrained quadratic programs (QPs).  Schwarz algorithms have been traditionally used to solve linear algebra systems arising from partial differential equations, but we have recently shown that they are also effective at solving structured optimization problems. In the proposed scheme, we consider QPs whose algebraic structure can be represented by  graphs. The graph domain is partitioned into overlapping subdomains (yielding a set of coupled subproblems), solutions for the subproblems are computed in parallel, and  convergence is enforced by updating primal-dual information in the overlapping regions. We show that convergence is guaranteed if the overlap is sufficiently large and that the convergence rate improves exponentially with the size of the overlap. Convergence results rely on a key property of graph-structured problems that is known as exponential decay of sensitivity. Here, we establish conditions under which this property holds for constrained QPs (as those found in network optimization and optimal control), thus extending existing work that addresses unconstrained QPs. The numerical behavior of the Schwarz  scheme is demonstrated by using a DC optimal power flow problem defined over a network with 9,241 nodes.
\end{abstract}

\section{Introduction}
Structured optimization problems arise in many applications such as {trajectory planning and model predictive control \cite{rawlings2009model}, multistage stochastic programming \cite{yu2019advanced,pereira1991multi}, optimization with embedded partial differential equations (PDEs) \cite{biegler2007real}, and in optimization of energy infrastructures  \cite{lavaei2011zero,coffrin2018powermodels,zlotnik2015optimal}. Decomposition schemes that exploit the problem structure have been used to overcome scalability limits of centralized schemes  \cite{rantzer2009dynamic,conte2012computational,shin2019hierarchical,engelmann2020decomposition,jackson2003temporal}.} A wide range of decomposition schemes have been proposed in the literature such as Lagrangian decomposition \cite{lemarechal2001lagrangian}, the alternating direction method of multipliers (ADMM) \cite{boyd2011distributed}, and Jacobi/Gauss-Seidel methods \cite{shin2018multi}. The basic tenet behind such algorithms is to decompose the original problem into subproblems and to coordinate subproblem solutions by using primal-dual information. {These decomposition schemes are different from waterfilling methods \cite{carli2017distributed} in that the constraints are not necessarily satisfied at  each iteration within the algorithm. Moreover, these schemes are advantageous in that they can handle a wide variety of objective/constraint formulations and in that they have well-understood convergence properties. However, a disadvantage of these schemes is that convergence can be rather slow \cite{kozma2015benchmarking}.}

{\it Overlapping Schwarz} decomposition schemes have been recently used to solve structured optimization problems, and they have  demonstrated to outperform popular schemes such as ADMM and Jacobi/Gauss-Seidel in certain problem classes \cite{shin2020decentralized}. Schwarz algorithms were originally developed for the parallel solution of linear algebra systems arising in PDEs, but such schemes can also be used to handle general linear systems and optimization problems by exploiting their underlying algebraic topology  \cite{frommer2001algebraic}. As the name suggests, overlapping Schwarz algorithms decompose the full problem (the underlying graph) into {\it subproblems} that are defined over overlapping subdomains. Solutions for the subproblems are computed in parallel and convergence is enforced by updating primal-dual information in the overlapping regions.  In the context of quadratic programs (QP) that are unconstrained and convex, we have shown that the convergence rate of Schwarz algorithms improves exponentially with the size of the overlapping region \cite{shin2020decentralized}. Overlapping Schwarz schemes provide a bridge between fully decentralized Jacobi/Gauss-Seidel algorithms (no overlap) and centralized algorithms (the overlap is the entire domain). 

This paper presents an overlapping Schwarz algorithm for constrained QPs. We analyze the convergence properties of the algorithm and derive an explicit relationship between its convergence rate and the size of the overlap. {This result extends existing convergence results for unconstrained QPs \cite{shin2020decentralized} to equality- and inequality-constrained QPs}. In particular, we show that the algorithm converges with sufficiently large overlap and that the convergence rate improves exponentially with the size of overlap. {This convergence result relies on a property called {\it exponential decay of sensitivity} \cite{na2019exponential,shin2020diffusing,grüne2019sensitivity,grune2020exponential,shin2020decentralized}. The property states that the sensitivity of the primal-dual solution at a given node decays exponentially with respect to the distance from the perturbation. Such a property has been established for discrete-time \cite{na2019exponential,shin2020diffusing} and continuous-time \cite{grüne2019sensitivity,grune2020exponential} optimal control problems (the graph is a line) and for unconstrained QPs (general graph) \cite{shin2020decentralized}. This paper establishes the exponential sensitivity property for constrained QPs over general graphs, thus making the results broadly applicable.}

The paper is organized as follows. In the remainder of this section we introduce basic notation and the problem under study. In Section \ref{sec:alg} we introduce the overlapping Schwarz algorithm. In Section \ref{sec:sens} we present the main theoretical results. We first analyze the sensitivity of the solution of structured QPs against parametric perturbations and then use the results to establish  convergence conditions for the algorithm. Numerical results are given in Section \ref{sec:num}. \iffull\else {Some of the technical lemmas and proofs are omitted due to page limitations; these results can be found in the full version of the paper, available at \url{https://arxiv.org/abs/2003.07502}}.\fi

{\it Notation}. The set of real numbers and the set of integers are denoted by $\mathbb{R}$ and $\mathbb{I}$, respectively, and we define $\mathbb{I}_{a:b}:=\mathbb{I}\cap[a,b]$, $\mathbb{I}_{>0}:=\mathbb{I}\cap(0,\infty)$, $\mathbb{R}_{>0}:=(0,\infty)$, and $\overline{\mathbb{R}}:=\mathbb{R}\cup\{\infty\}$. By default, vectors are assumed to be column vectors, and we use the syntax 
$(M_1,\cdots,M_n):=[M_1^\top\,\cdots\,M_n^\top]^\top$, 
$\{M_i\}_{i\in \mathcal{I}}:=(M_{i_1}, \cdots, M_{i_{m}})$, and
$\{M_{i,j}\}_{i\in \mathcal{I},j\in \mathcal{J}}:=\{\{M_{i,j}^\top\}_{j\in\mathcal{J}}^\top\}_{i\in \mathcal{I}}$, where $\mathcal{I}=\{i_1<\cdots<i_{m}\}$ and $\mathcal{J}=\{j_1<\cdots<j_{n}\}$.
Furthermore, $v[i]$ is the $i$th component of $v$, $M[i,j]$ is the $(i,j)$th component of $M$,
$v[\mathcal{I}]:=\{v[i]\}_{i\in \mathcal{I}}$, and
$M[\mathcal{I},\mathcal{J}]:=\{M[i,j]\}_{i\in \mathcal{I},j\in \mathcal{J}}$.
Vector 2-norms and induced 2-norms are denoted by $\Vert\cdot\Vert$.
For matrices $A$ and $B$, $A\succeq B$ indicates that $A-B$ is positive semi-definite while $A\geq B$ represents a componentwise inequality.

{\it Setting}. We consider a (potentially infinite) {\it parent graph} $\mathcal{G}(\mathcal{V},\mathcal{E})$, where $\mathcal{V}$ is the set of nodes and $\mathcal{E}$ is the set of edges. We also consider the finite node subset $U\subseteq \mathcal{V}$ and the following QP:
\begin{subequations}\label{eqn:qp}
  \begin{align}
    \;\min_{\{x_i\}_{i\in U}}\; &\sum_{i\in U}\sum_{j\in N_U[i]}\frac{1}{2}x_i^\top Q_{i,j}x_j - \sum_{i\in U}f_i^\top x_i\label{eqn:qp-obj}\\
    \st\;& \sum_{j\in N_U[i]} A^{\mathcal{E}}_{i,j}x_j = g^{\mathcal{E}}_i,\; (\lambda^{\mathcal{E}}_i),\; i\in U\label{eqn:qp-con-e}\\
    & \sum_{j\in N_U[i]} A^{\mathcal{I}}_{i,j}x_j \geq g^{\mathcal{I}}_i,\; (\lambda^{\mathcal{I}}_i),\; i\in U.\label{eqn:qp-con-i}
  \end{align}
\end{subequations}
Here $x_i\in\mathbb{R}^{r_i}$ are the decision variables for node $i$; $\lambda^{\mathcal{E}}_i\in\mathbb{R}^{m^{\mathcal{E}}_i}$ and $\lambda^{\mathcal{I}}_i\in\mathbb{R}^{m^{\mathcal{I}}_i}$ are the dual variables; $Q_{i,j}\in\mathbb{R}^{r_i\times r_j}$, $A^{\mathcal{E}}_{i,j}\in\mathbb{R}^{m^{\mathcal{E}}_i \times r_j}$, $A^{\mathcal{I}}_{i,j}\in\mathbb{R}^{m^{\mathcal{I}}_i \times r_j}$, $f_i\in\mathbb{R}^{r_i}$, $g^{\mathcal{E}}_i\in\mathbb{R}^{m^{\mathcal{E}}_i}$, and $g^{\mathcal{I}}_i\in\mathbb{R}^{m^{\mathcal{I}}_i}$ are the data; and $N_U[X]:=N_U(X)\cup X$, where $N_U(X):=\{j\in U\setminus X: \exists i\in X\text{ such that }\{i,j\}\in\mathcal{E}\}$ and the argument is considered as a singleton if $X$ is a single node. We define $A_{i,j}:=(A^{\mathcal{E}}_{i,j},A^{\mathcal{I}}_{i,j})$, $g_i:=(g^{\mathcal{E}}_i,g^{\mathcal{I}}_i)$, $\lambda_i:=(\lambda^{\mathcal{E}}_i,\lambda^{\mathcal{I}}_i)$, $z_i:=(x_i,\lambda_i)$, $d_i:=(f_i,g_i)$, $m_i:=m^{\mathcal{E}}_i + m^{\mathcal{I}}_i$, and $n_i:=r_i+m_i$. We assume that $Q_{i,j}=Q_{j,i}^\top$.

An equivalent problem can be written in a compact form:
\begin{align*}
  P_{U}(\bd_{U}):\; \min_{\bx_{U}}\; &\frac{1}{2}\bx_U^\top\bQ_U \bx_U - \bff^\top_U \bx_U\\
  \st\;& \bA^{\mathcal{E}}_{U} \bx_U = \bg^{\mathcal{E}}_U, \; (\blambda^{\mathcal{E}}_U)\\
  & \bA^{\mathcal{I}}_{U} \bx_U \geq \bg^{\mathcal{I}}_U, \; (\blambda^{\mathcal{I}}_U).
\end{align*}
Here, $\bx_U:=\{x_i\}_{i\in U}$; $\blambda^{\mathcal{E}}_U:=\{\lambda^{\mathcal{E}}_i\}_{i\in U}$; $\blambda^{\mathcal{I}}_U:=\{\lambda^{\mathcal{I}}_i\}_{i\in U}$; $\blambda_U:=\{\lambda_i\}_{i\in U}$; $\bz_U:=\{z_i\}_{i\in U}$; $\bff_U:=\{f_i\}_{i\in U}$; $\bg^{\mathcal{E}}_U:=\{g^{\mathcal{E}}_i\}_{i\in U}$; $\bg^{\mathcal{I}}_U:=\{g^{\mathcal{I}}_i\}_{i\in U}$; $\bg_U:=\{g_i\}_{i\in U}$; $\bd_U:=\{d_i\}_{i\in U}$; $\bQ_{U}=\{Q_{i,j}\}_{i,j\in U}$, $\bA^{\mathcal{E}}_{U}:=\{A^{\mathcal{E}}_{i,j}\}_{i,j\in U}$; $\bA^{\mathcal{I}}_{U}:=\{A^{\mathcal{I}}_{i,j}\}_{i,j\in U}$; $\bA_{U}:=\{A_{i,j}\}_{i,j\in U}$; $r_U:=\sum_{i\in U}r_i$; $m_U=\sum_{i\in U}m_i$; and $n_U=\sum_{i\in U} n_i$. The problem is denoted as the parametric form $P_U(\bd_U)$. 

\section{Overlapping Schwarz Algorithm}\label{sec:alg}
This section introduces the Schwarz algorithm for the solution of $P_V(\bd_V)$ (referred to as the {\it full problem}) with $V\subseteq \mathcal{V}$. We consider a {\it non-overlapping} partition $\{V_k\subseteq V\}_{k=1}^K$ of $V$ and an {\it overlapping partition} $\{W_k\subseteq V\}_{k=1}^K$ of $V$ such that $V_k\subseteq W_k$ holds for $k\in\mathbb{I}_{1:K}$. We call $V_1,\cdots,V_K$ {\it original subdomains} and $W_1,\cdots,W_K$ {\it expanded subdomains}. The Schwarz algorithm is defined below.

\begin{algorithm}\caption{Overlapping Schwarz Algorithm}\label{alg:main}
  \begin{algorithmic}[1]
    \Require $\bz^{(0)}_V$, $\{V_k\}_{k=1}^K$, $\{W_k\}_{k=1}^K$
    \For {$\ell=0,1,\cdots$}
    \For {(in parallel) $k=1$ to $K$}
    \State $\bz^{(\ell+1)}_{V_k}=T_{V_k\leftarrow W_k}\bz^*_{W_k}(\bd_{W_k}-\bH_{-W_k}\bz^{(\ell)}_{-W_k})$
    \EndFor
    \EndFor
    \Ensure $\bz^{(\ell)}_V$
  \end{algorithmic} 
\end{algorithm}
Here, we use a syntax that can be applied to any $U\subseteq \mathcal{V}$: $\bH_{U}:=\{H_{i,j}\}_{i,j\in U}$; $\bH_{-U}:=\{H_{i,j}\}_{i\in U,j\in N_V(U)}$; $H_{i,j}:=[Q_{i,j}\;A_{j,i}^\top; A_{i,j}\;0]$; and $\bz_{-U}:=\{z_{i}\}_{i\in N_V(U)}$. Furthermore, $\bz^*_U(\cdot)$ is the  primal-dual solution mapping of the parametric optimization problem $P_{U}(\cdot)$; $T_{U_1\leftarrow U_2}:=\{T_{i,j}\}_{i\in U_1,j\in U_2}$, where $U_1,U_2\subseteq \mathcal{V}$ and $T_{i,j}=I_{n_i\times n_i}$ if $i=j$ and $0_{n_i\times n_j}$ otherwise. Note that $\bz_{-U}$ is supposed to represent the solution information that is {\it complementary} to $U$. The full complementary solution information includes the solution on $V\setminus U$. However, the variables and constraints in $U$ are coupled only with $N_V(U)$, so it suffices to incorporate information only for the {\it coupled complementary} region $N_V(U)$. Therefore, we will abuse the term complementary solution to represent the coupled complementary solution $\bz_{-U}$.

The core part of the algorithm (line 3) consists of three steps:  subproblem solution, solution restriction, and  primal-dual exchange. In the first step, one formulates the {\it subproblem} for the $k$th subdomain as $P_{W_k}(\bd_{W_k}-\bH_{-W_k} \bz^{(\ell)}_{-W_k})$ (this formulation will be justified later in Lemma \ref{lem:con}). The subproblem incorporates complementary solution information $\bH_{-W_k} \bz^{(\ell)}_{-W_k}$, which is obtained during the third inner step of the previous iteration step. The subproblem is solved to obtain its solution $\bz^*_{W_k}(\bd_{W_k}-\bH_{-W_k} \bz^{(\ell)}_{-W_k})$. Here, we observe that solution multiplicity exists at the overlapping region. To remove such multiplicity, we {\it restrict} the solution in the second step. In particular, we abandon the primal-dual solutions associated with $W_k\setminus V_k$ (subdomain region acquired by expansion) and take only those solutions associated with $V_k$ (the original subdomain region). This procedure is represented by the restriction operator $T_{V_k\leftarrow W_k}$. After restriction, the solutions are assembled over $k\in\mathbb{I}_{1:K}$ to make the next guess of the solution $\bz^{(\ell+1)}_V$. In the third step, the primal-dual solutions  are exchanged across the subdomains to update the complementary information $\bH_{-W_k} \bz^{(\ell)}_{-W_k}$ for each subproblem. The schematic of the algorithm is shown in Fig. \ref{fig:schematic}. The algorithm can be implemented in a fully decentralized manner, and different updating schemes can be used (e.g., Gauss-Seidel or asynchronous) \cite{shin2020decentralized}.

\begin{figure}[t]
  \centering
  \vspace{0.07in}
  \begin{tikzpicture}
    \node[circle,fill,scale=.4] (A1) at (-.5,0) {};
    \node[circle,fill,scale=.4] (A2) at (-1,2/3) {};
    \node[circle,fill,scale=.4] (A3) at (-.5,4/3) {};
    \node[circle,fill,scale=.4] (A4) at (-2.2,2/3) {};
    \node[circle,fill,scale=.4] (A5) at (-2.7,0) {};
    \node[circle,fill,scale=.4] (A6) at (-2.7,4/3) {};

    \node[circle,fill,scale=.4] (B1) at (.7,-4/3) {};
    \node[circle,fill,scale=.4] (B2) at (1.2,-2/3) {};
    \node[circle,fill,scale=.4] (B3) at (.7,0) {};
    \node[circle,fill,scale=.4] (B4) at (2.4,-2/3) {};
    \node[circle,fill,scale=.4] (B5) at (2.9,-4/3) {};
    \node[circle,fill,scale=.4] (B6) at (2.9,0) {};

    \draw (A1)--(A2) (A2)--(A3) (A2)--(A4) (A4)--(A5) (A4)--(A6);
    \draw (A1)--(B3);
    \draw (B1)--(B2) (B2)--(B3) (B2)--(B4) (B4)--(B5) (B4)--(B6);

    \draw[red,transform canvas={xshift=.5em,yshift=.2em},->] (A2) -- (A1);
    \node at (-.3,2/3) {$\color{red} \bz^{(\ell)}_{-2}$};
    \draw[blue,transform canvas={xshift=-.5em,yshift=-.2em},->] (B2) -- (B3);
    \node at (.5,-2/3) {$\color{blue} \bz^{(\ell)}_{-1}$};

    \draw[rounded corners=5pt,red,opacity=0.2,line width=4pt](-3.2,-.3) rectangle ++(4.4,2.6);
    \draw[rounded corners=5pt,blue,opacity=0.2,line width=4pt](-1,-2.3) rectangle ++(4.4,2.6);
    \draw[rounded corners=5pt,red,opacity=0.5,line width=2pt](-3.15,-.2) rectangle ++(3.2,2.4);
    \draw[rounded corners=5pt,blue,opacity=0.5,line width=2pt](.15,-2.2) rectangle ++(3.2,2.4);
    \node at (-.5,-1.8) {$W_2$};
    \node at (.7,1.8) {$W_1$};
    \node at (.7,-1.8) {$V_2$};
    \node at (-.5,1.8) {$V_1$};
  \end{tikzpicture}
  \caption{Schematic representation of the overlapping Schwarz algorithm.}\label{fig:schematic}
\end{figure}
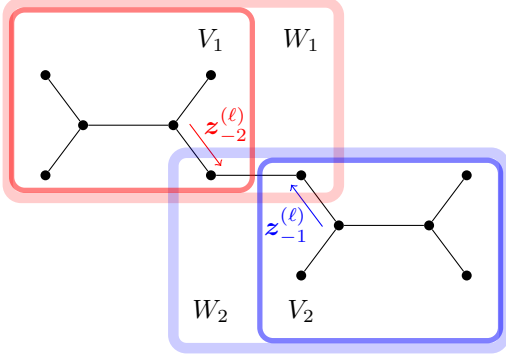

\section{Main Results}\label{sec:sens}
In this section we analyze the convergence of the Schwarz algorithm. We will see that parametric sensitivity plays a central role in convergence behavior because, intuitively, primal-dual solutions of the neighbors of a subdomain enter as parametric perturbations. We first analyze the parametric sensitivity of $P_U(\cdot)$ in a way that the result can be generally applied to any node subset $U\subseteq \mathcal{V}$. We then exploit this sensitivity result to establish convergence.

\subsection{Preliminaries}
\begin{assumption}\label{ass:sol}
  The following is assumed for $U\subseteq \mathcal{V}$.
  \begin{itemize}
  \item[(a)] $\bQ_U\succeq 0$.
  \item[(b)] Given a convex set $\mathbb{D}_U\subseteq \mathbb{R}^{m_U}$, for any $P_U(\bd_U)$ with $\bd_U\in \mathbb{D}_U$, there exists a primal-dual solution at which the second-order sufficient condition (SOSC) and linear independence constraint qualification (LICQ) hold.
  \end{itemize}  
\end{assumption}

By Assumption \ref{ass:sol}, there exists a unique primal-dual solution of $P_U(\bd_U)$ for any $\bd_U\in\mathbb{D}_U$. Thus, the primal-dual solution mapping $\bz^*_U: \mathbb{D}_U\rightarrow \mathbb{R}^{n_U}$ is well defined.

\begin{definition}\label{def:basic}
  Consider $P_U(\cdot)$ and $B\subseteq \mathbb{I}_{1:n_U}$.
  \begin{itemize}
  \item[(a)] $B$ is called a basis if $\bH_{U}[B,B]$ is nonsingular.
  \item[(b)] $\bz_U(\bd_U)\in\mathbb{R}^{n_U}$ is called a basic solution of $P_U(\bd_U)$ if
    \begin{subequations}\label{eqn:basic-sol}
      \begin{align}
        \bH_U[B,B]\bz_U(\bd_U)[B] &=\bd_U[B],\\
        \bz_U(\bd_U)[\mathbb{I}_{1:n_U}\setminus B] &= 0.
      \end{align}
    \end{subequations}
    \item[(c)] $B$ is feasible (optimal) for $P_U(\bd_U)$ if its basic solution is feasible (optimal) for $P_U(\bd_U)$.
\end{itemize}
\end{definition}
Definition \ref{def:basic} is an extension of the notion of basis for linear programs (studied in \cite{shin2020diffusing}). Note that the basis associated with a basic solution may not be unique for the case where there exist zero components in $\bx^*_U(\bd_U)$ or strict complementarity does not hold. We consider $\bz^B_U:\mathbb{R}^{m_U}\rightarrow \mathbb{R}^{n_U}$ as the basic solution mapping for $B$.

\iffull\begin{lemma}\label{lem:basic-1}
  Let Assumption \ref{ass:sol} hold. For $\bd_U\in\mathbb{D}_U$, there exists $B\in\mathbb{B}_{U}$ that is a basis for $\bz^*_U(\bd_U)$, where $\mathbb{B}_U:=\{B\subseteq\mathbb{I}_{1:n_U}:\exists \bd_U\in\mathbb{D}_{U}\text{ such that } B \text{ is optimal for }P_U(\bd_U)\}$.
\end{lemma}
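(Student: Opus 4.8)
The plan is to prove the lemma by taking, in the definition of $\mathbb{B}_U$, the existentially quantified parameter to equal the given $\bd_U$, and then exhibiting an explicit basis $B$ whose basic solution for $P_U(\bd_U)$ is exactly $\bz^*_U(\bd_U)$. By Assumption~\ref{ass:sol}(b) and the remark following it, $\bz^*_U(\bd_U)=(\bx^*_U,\blambda^{\mathcal{E}*}_U,\blambda^{\mathcal{I}*}_U)$ is the unique KKT point of $P_U(\bd_U)$ and SOSC and LICQ hold at it, so its set of active inequality constraints is well defined; call it $\mathcal{A}$ and let $\mathcal{N}$ be the complementary (inactive) set. I would then let $B\subseteq\mathbb{I}_{1:n_U}$ collect all components of $\bz_U$ associated with the primal variables $\bx_U$, with the equality multipliers $\blambda^{\mathcal{E}}_U$, and with the active inequality multipliers $\blambda^{\mathcal{I}}_U[\mathcal{A}]$, so that $\mathbb{I}_{1:n_U}\setminus B$ is precisely the index set of the inactive inequality multipliers $\blambda^{\mathcal{I}}_U[\mathcal{N}]$.

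The first step is to show that $B$ is a basis, i.e.\ that $\bH_U[B,B]$ is nonsingular. Reading off the block form $H_{i,j}=[Q_{i,j}\;A_{j,i}^\top;A_{i,j}\;0]$, one sees that after a symmetric permutation $\bH_U[B,B]$ equals $\left[\begin{smallmatrix}\bQ_U & \bar\bA_U^\top\\ \bar\bA_U & \bzero\end{smallmatrix}\right]$, the KKT matrix of the equality-constrained QP obtained by keeping the equality constraints together with the active inequality constraints, where $\bar\bA_U$ stacks $\bA^{\mathcal{E}}_U$ on top of $\bA^{\mathcal{I}}_U[\mathcal{A},:]$. I would then invoke the standard fact that, when $\bQ_U\succeq\bzero$, such a matrix is nonsingular if and only if $\bar\bA_U$ has full row rank and $\ker\bQ_U\cap\ker\bar\bA_U=\{\bzero\}$. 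Full row rank of $\bar\bA_U$ is exactly LICQ at $\bz^*_U(\bd_U)$, and the kernel condition follows from SOSC: $\ker\bar\bA_U$ is contained in the critical cone at the solution (a direction annihilating all equality and active-inequality gradients satisfies every critical-cone constraint), so SOSC gives $\bv^\top\bQ_U\bv>0$ for every nonzero $\bv\in\ker\bar\bA_U$, which together with $\bQ_U\succeq\bzero$ forces $\ker\bQ_U\cap\ker\bar\bA_U=\{\bzero\}$.

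The second step is to verify that $\bz^*_U(\bd_U)$ satisfies (\ref{eqn:basic-sol}) for this $B$. Complementary slackness at the KKT point gives $\blambda^{\mathcal{I}*}_U[\mathcal{N}]=\bzero$ (inactive constraints carry zero multipliers, which needs no strict complementarity), which is the second relation. For the first, the rows of $\bH_U$ indexed by $B$ encode stationarity and the equality- and active-inequality-constraint equations, while the only columns outside $B$ correspond to $\blambda^{\mathcal{I}}_U[\mathcal{N}]$, on which $\bz^*_U(\bd_U)$ vanishes; hence evaluating those rows at $\bz^*_U(\bd_U)$ yields $\bH_U[B,B]\,\bz^*_U(\bd_U)[B]$, and this equals $\bd_U[B]$ precisely because stationarity and the equality and active-inequality constraints hold at $\bz^*_U(\bd_U)$. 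Therefore $\bz^*_U(\bd_U)$ is the (unique, by nonsingularity) basic solution $\bz^B_U(\bd_U)$; since it is the optimal primal-dual pair of $P_U(\bd_U)$ and $\bd_U\in\mathbb{D}_U$, the basis $B$ is optimal for $P_U(\bd_U)$, so $B\in\mathbb{B}_U$ and $B$ is a basis for $\bz^*_U(\bd_U)$.

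I expect the only mildly delicate point to be the nonsingularity argument — in particular, passing from SOSC, which is stated on the critical cone (a genuine cone, not a subspace, when strict complementarity fails), to positive definiteness of $\bQ_U$ on the subspace $\ker\bar\bA_U$; this passage becomes immediate once one observes that $\ker\bar\bA_U$ lies inside the critical cone. The remaining steps are routine bookkeeping with the KKT system and with the (sparsity-irrelevant) block form of $\bH_U$.
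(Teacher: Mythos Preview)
Your proof is correct and follows the same overall strategy as the paper: pick an index set $B$ from the KKT structure at $\bz^*_U(\bd_U)$, argue nonsingularity of $\bH_U[B,B]$ via LICQ and SOSC, and then read off \eqref{eqn:basic-sol} from the KKT conditions together with complementary slackness. The one substantive difference is the choice of $B$. The paper takes $B:=\{i:\bz^*_U(\bd_U)[i]\neq 0\}$, the support of the solution, and then observes that $\bH_U[B,B]$ is a permuted principal submatrix of the active KKT matrix $\widehat{\bH}$; you instead take $B$ to be all primal indices together with the equality- and active-inequality-multiplier indices, so that $\bH_U[B,B]$ \emph{equals} $\widehat{\bH}$ up to permutation. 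Your choice makes the nonsingularity step immediate from the standard KKT-matrix criterion (and your observation that $\ker\bar\bA_U$ sits inside the critical cone handles the non--strict-complementarity case cleanly), whereas the paper's smaller $B$ requires one to accept that the relevant principal submatrix of a nonsingular saddle-point matrix is itself nonsingular. Both choices yield a valid optimal basis, since in either case $\mathbb{I}_{1:n_U}\setminus B$ consists only of multiplier components that vanish at the solution.
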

\iffull\begin{proof}
  We let
  \begin{align}\label{eqn:mat}
    \widehat{\bH}:=
    \begin{bmatrix}
      \bQ_{U}& \bA_{U}[\mathcal{A}^*(\bd_U),:]^\top\\
      \bA_{U}[\mathcal{A}^*(\bd_U),:]
    \end{bmatrix},
  \end{align}
  where $\mathcal{A}^*(\bd_U)$ is the active constraint set evaluated at the solution of $P_U(\bd_U)$. 
  From Assumption \ref{ass:sol}, $\bA_{U}[\mathcal{A}^*(\bd_U),:]$ has full row-rank by LICQ, and the reduced Hessian associated with $\widehat{\bH}$ is positive definite by SOSC. These imply that $\widehat{\bH}$ is nonsingular \cite[Lemma 16.1]{nocedal2006numerical}. We choose $B:=\{i\in\mathbb{I}_{1:n_U}: \bz^*_U(\bd_U)[i]\neq 0\}$. One can show that $\bH_U[B,B]$ is a permuted principal submatrix of $\widehat\bH$. Therefore, ${\bH}_U[B,B]$ is nonsingular, and this yields that $B$ is a basis. From the KKT conditions for $P_U(\bd_U)$ and the definition of $B$, \eqref{eqn:basic-sol} holds for $\bz^*_U(\bd_U)$. Thus, we have $\bz^B_U(\bd_U)=\bz^*_U(\bd_U)$.
\end{proof}\fi

\begin{lemma}\label{lem:quad}
  Let $q_1,\cdots,q_{N_q}:(a,b)\rightarrow \mathbb{R}$ be quadratic functions. We let $q:(a,b)\rightarrow \mathbb{R}$ be $q(\cdot):=\min_{i\in\mathbb{I}_{1:{N_q}}} q_i(\cdot)$. Then there exists $\{a_0=a,a_1,\cdots,a_{K_q}=b\}$ such that for each $k\in\mathbb{I}_{1:{K_q}}$, there exists $i\in\mathbb{I}_{1:{N_q}}$ such that $q_i(\cdot)=q(\cdot)$ on $(a_{k-1},a_{k})$.
\end{lemma}
\begin{proof}
  For each $(i,j)\in \mathbb{I}_{1:{N_q}}\times \mathbb{I}_{1:{N_q}}$, we let $I_{i,j}:=\{x\in(a,b): q_i(x)\leq q_j(x)\}$. Since $q_i(\cdot)$ and $q_j(\cdot)$ are quadratic, we have that $I_{i,j}$ is obtained as a union of intervals (not necessarily open or closed). Then we can define $I_i:=\bigcap_{j\in\mathbb{I}_{1:{N_q}}} I_{i,j}$. Since $I_{i,j}$ are unions of intervals, we have that $I_i$ is also a union of intervals. By collecting the end points of the intervals in $\{I_{i}:i\in\mathbb{I}_{1:{N_q}}\}$, we can construct $\{a_0,\cdots,a_{K_q}\}$. In particular, $\{a_0,\cdots,a_{K_q}\}=\bigcup_{i\in\mathbb{I}_{1:{N_q}}}\text{closure}(I_i)\setminus \interior(I_i)$. We observe that $\bigcup_{i\in\mathbb{I}_{1:{N_q}}} I_i= (a,b)$; thus,  for any $k\in\mathbb{I}_{1:{K_q}}$, $(a_{k-1},a_k)\subseteq I_i$ for some $i\in\mathbb{I}_{1:{N_q}}$. This means $q_i(\cdot)=q(\cdot)$ on $(a_{k-1},a_k)$. The proof is complete.
 \end{proof}\fi

\begin{lemma} \label{lem:basic-2}
  Let Assumption \ref{ass:sol} hold. For $\bd_U,\bd_U'\in \mathbb{D}_U$, there exist $\{s_0=0<\cdots<s_{N_d}=1\}$ and $\{B_k\in\mathbb{B}_{U}\}_{k=1}^{N_d}$ such that for $k\in\mathbb{I}_{1:N_d}$, $B_k$ is optimal for $P_U(\bd^s_U)$ with any $s\in[s_{k-1},s_{k}]$, where $\bd^s_U:=(1-s)\bd_U+s\bd'_U$.
\end{lemma}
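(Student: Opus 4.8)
The plan is to analyze the optimal value function along the line segment $\bd^s_U = (1-s)\bd_U + s\bd'_U$ for $s\in[0,1]$, and to show that as $s$ varies, only finitely many bases from $\mathbb{B}_U$ can be optimal, with each one optimal on a subinterval. First I would invoke Lemma \ref{lem:basic-1}: for each $s\in[0,1]$, since $\bd^s_U\in\mathbb{D}_U$ by convexity of $\mathbb{D}_U$, there exists a basis $B\in\mathbb{B}_U$ that is optimal for $P_U(\bd^s_U)$ and whose basic solution $\bz^B_U(\bd^s_U)$ equals the true solution $\bz^*_U(\bd^s_U)$. Since $\bH_U[B,B]$ is nonsingular, the basic solution mapping $\bz^B_U$ is affine in $\bd_U$, hence affine in $s$ along the segment, so the objective value $\tfrac12 \bx^B_U(\bd^s_U)^\top \bQ_U \bx^B_U(\bd^s_U) - \bff^s_U{}^\top \bx^B_U(\bd^s_U)$ is a \emph{quadratic} function of $s$ on $(0,1)$. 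Call this $q_B(s)$. The key point is that $\mathbb{B}_U$ is a \emph{finite} set (it consists of subsets of $\mathbb{I}_{1:n_U}$), so there are finitely many candidate quadratics $\{q_B : B\in\mathbb{B}_U\}$.

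Next I would argue that the true optimal value function $s\mapsto v^*(s)$ (the optimal objective value of $P_U(\bd^s_U)$) equals $\min_{B\in\mathbb{B}_U} q_B(s)$ on $(0,1)$. The inequality $v^*(s)\le q_B(s)$ holds for every $B\in\mathbb{B}_U$ that is \emph{feasible} at $\bd^s_U$ (its basic solution is a feasible point), and one must be slightly careful: a basis $B\in\mathbb{B}_U$ need not be feasible at every $\bd^s_U$. To handle this cleanly I would either restrict the minimum to the bases feasible at $s$, or observe that $v^*(s) = q_{B(s)}(s)$ for the optimal basis $B(s)$ guaranteed by Lemma \ref{lem:basic-1}, so that $v^*(s) \ge \min_B q_B(s)$ fails only if some infeasible $B$ gives a smaller value — which cannot happen once we intersect with the feasibility regions. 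The cleaner route: apply Lemma \ref{lem:quad} to the finite family $\{q_B\}_{B\in\mathbb{B}_U}$ to get breakpoints $\{a_0=0,\dots,a_{K_q}=1\}$ such that on each open subinterval $(a_{k-1},a_k)$ a \emph{single} $q_{B_k}$ achieves the pointwise minimum. It then remains to check that on each such subinterval this minimizing $B_k$ is in fact optimal (not merely value-matching) for $P_U(\bd^s_U)$: since $v^*(s)\le q_{B_k}(s)$ whenever $B_k$ is feasible, and $v^*(s) = q_{B(s)}(s) \ge \min_B q_B(s) = q_{B_k}(s)$ on the subinterval, feasibility of $B_k$ on the subinterval forces $v^*(s)=q_{B_k}(s)$ and hence optimality of $B_k$. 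One shows $B_k$ stays feasible on the whole subinterval because its feasibility set $\{s : \bA_U \bx^{B_k}_U(\bd^s_U) \ \text{(appropriately)} \ \bg^s_U\}$ is an intersection of half-lines, hence an interval, and it contains a dense subset of $(a_{k-1},a_k)$ (namely all $s$ where $B_k$ is the optimal basis), so it contains the whole open interval, and by closedness the endpoints too. Finally, I would relabel the breakpoints $\{a_k\}$ as $\{s_0=0<\dots<s_{N_d}=1\}$ and assign $B_k\in\mathbb{B}_U$ to $[s_{k-1},s_k]$, noting optimality extends to the closed interval by continuity of $v^*$ and of each $q_{B_k}$.

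The main obstacle I anticipate is the bookkeeping around feasibility versus optimality: Lemma \ref{lem:quad} only tracks which quadratic is \emph{smallest}, and a priori a basic solution that minimizes the quadratic formula might be infeasible for $P_U(\bd^s_U)$, in which case it says nothing about the true optimal value. Overcoming this requires showing that the breakpoints from Lemma \ref{lem:quad} are fine enough that on each subinterval the minimizing basis is the genuinely optimal one — which I would do by combining (i) the existence of \emph{some} optimal basis at every $s$ via Lemma \ref{lem:basic-1}, (ii) the fact that each basis's feasibility region along the segment is an interval, and (iii) the pointwise-minimum property, so that wherever $B(s)$ (optimal) and $B_k$ (value-minimizing) differ, they must still share the same value $v^*(s)$, forcing $B_k$ to be optimal there too. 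A secondary technical point is ensuring $\bz^B_U(\bd^s_U)=\bz^*_U(\bd^s_U)$ exactly (so the KKT/complementarity conditions are met), which is precisely the content of Lemma \ref{lem:basic-1} and does not need to be re-derived.
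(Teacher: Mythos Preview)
Your overall strategy matches the paper's, but the ``cleaner route'' you settle on has a genuine gap in the feasibility step.

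You apply Lemma~\ref{lem:quad} to the raw quadratics $\{q_B\}_{B\in\mathbb{B}_U}$, obtaining on each subinterval $(a_{k-1},a_k)$ a single $B_k$ with $q_{B_k}(s)=\min_{B}q_B(s)$. You then need $B_k$ to be \emph{feasible} on that subinterval, and your argument for this is circular. You write that the feasibility set of $B_k$ ``contains a dense subset of $(a_{k-1},a_k)$ (namely all $s$ where $B_k$ is the optimal basis)'', and later that wherever the truly optimal basis $B(s)$ and $B_k$ differ ``they must still share the same value $v^*(s)$''. Neither claim is established. All you know is $v^*(s)=q_{B(s)}(s)\ge \min_B q_B(s)=q_{B_k}(s)$; the reverse inequality $v^*(s)\le q_{B_k}(s)$ requires feasibility of $B_k$, which is precisely what you are trying to prove. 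Nothing rules out that $B_k$ is infeasible on the entire subinterval with $q_{B_k}(s)<v^*(s)$ throughout: an infeasible basic solution can have strictly smaller objective than the constrained optimum (membership of $B_k$ in $\mathbb{B}_U$ only guarantees optimality at \emph{some} $\bd\in\mathbb{D}_U$, not necessarily on this segment). In that case Lemma~\ref{lem:quad} hands you a basis that is never optimal on $(a_{k-1},a_k)$, and the argument stalls.

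The paper fixes exactly this by building feasibility into the functions before invoking Lemma~\ref{lem:quad}. It sets $\pi_{(t)}(s)$ equal to the objective value of the basic solution when $B_{(t)}$ is feasible at $\bd^s_U$ and $+\infty$ otherwise, so that $\pi(s):=\min_t\pi_{(t)}(s)$ is genuinely the optimal value by Lemma~\ref{lem:basic-1}. Because the basic solution is affine in $s$, each feasibility region is a closed subinterval of $[0,1]$; the paper first partitions $[0,1]$ at all of these endpoints, so that on each resulting piece a fixed subset $\mathcal{T}_k$ of bases is feasible throughout and the corresponding $\pi_{(t)}$ are honest quadratics there. Only then is Lemma~\ref{lem:quad} applied, on each piece, to $\{\pi_{(t)}:t\in\mathcal{T}_k\}$. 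This two-stage partition is the missing ingredient in your plan; your first instinct (``restrict the minimum to the bases feasible at $s$'') was the right one, but you abandoned it for the ``cleaner route'' that does not go through.
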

\iffull\begin{proof}
  Let $\mathbb{B}_U=\{B_{(1)},\cdots,B_{(T)}\}$ (note that $\mathbb{B}_U$ is finite). We define $\pi_{(t)}:[0,1]\rightarrow\overline{\mathbb{R}}$ to be the mapping from $s\in[0,1]$ to the objective value of $\bz^{B_{(t)}}_U(\bd^s_U)$ for $P_U(\bd^s_U)$ (the value is $+\infty$ if $B_{(t)}$ is infeasible). Also, we define $\pi:[0,1]\rightarrow\overline{\mathbb{R}}$ by $\pi(s):=\min_{t\in\mathbb{I}_{1:T}}\pi_{(t)}(s)$. By Lemma \ref{lem:basic-1}, $\pi(\cdot)$ is the objective value mapping of $P_U(\bd^s_U)$ from $s\in[0,1]$.
  
  One can see that $z^{B_{(t)}}_U(\bd^s_U)$ is affine in $s$; thus the feasibility conditions for $z^{B_{(t)}}_U(\bd^s_U)$ can be expressed by a finite number of affine equalities and inequalities. This implies that the set of $s\in[0,1]$ on which $\pi(s)<\infty$ is obtained as a closed interval in $[0,1]$. Accordingly, $\pi_{(t)}(\cdot)$ is a quadratic function on a closed interval support. Now, we collect the endpoints of such intervals over $t\in\mathbb{I}_{1:T}$ to construct $\widetilde{\Pi}:=\{\widetilde{s}_0=0,\cdots,\widetilde{s}_{\widetilde{N}_d}=1\}$. For each $k\in\mathbb{I}_{1:\widetilde{N}_d}$, we collect $\mathcal{T}_k:=\{t\in\mathbb{I}_{1:T}:\pi_{(t)}(\cdot)<\infty$ on $(\widetilde{s}_{k-1},\widetilde{s}_{k})\}$. Observe that (i) $\pi_{(t)}$ with $t\in\mathcal{T}_k$ are quadratic on $(\widetilde{s}_{k-1},\widetilde{s}_{k})$ and (ii) $\pi(\cdot)=\min_{t\in\mathcal{T}_k}\pi_{(t)}(\cdot)$ on $(\widetilde{s}_{k-1},\widetilde{s}_{k})$. By Lemma \ref{lem:quad} (stated below, applicable due to (i)), we have that each $(\widetilde{s}_{k-1},\widetilde{s}_{k})$ can be further divided by using $\widetilde{\widetilde{\Pi}}_{k}:=\{\widetilde{\widetilde{s}}_{k,0}=\widetilde{s}_{k-1},\cdots,\widetilde{\widetilde{s}}_{k,\widetilde{\widetilde{N}}_{d,k}}=\widetilde{s}_k\}$, where for each $(\widetilde{\widetilde{s}}_{k,k'-1},\widetilde{\widetilde{s}}_{k,k'})$ with $k'\in\mathbb{I}_{1:\widetilde{\widetilde{N}}_{d,k}}$ and $k\in\mathbb{I}_{1:\widetilde{N}_d}$, there exists $t\in\mathcal{T}_k$ such that $\pi_{(t)}=\min_{t\in\mathcal{T}_k}\pi_{(t)}(\cdot)=\pi(\cdot)$ (recall observation (ii)). 

We now let $\{s_0,\cdots,s_{N_d}\}=\bigcup_{k\in\mathbb{I}_{1:\widetilde{N}_d}}\widetilde{\widetilde{\Pi}}_{k}$. One can observe that, for each $k\in\mathbb{I}_{1:N_d}$, there exists $t\in\mathbb{I}_{1:T}$ such that $\pi_{(t)}(\cdot)=\pi(\cdot)$ on $(s_{k-1},s_k)$. We choose such $B_{(t)}$ as $B_k$; it is known that the objective value mapping of a QP is continuous on its support (e.g., see \cite[Corollary 9]{hadigheh2007sensitivity} or \cite[Theorem 5.53]{bonnans2013perturbation}); thus $\pi(\cdot)$ is continuous on its support. By the continuity of $\pi(\cdot)$ and $\pi_{(t)}(\cdot)$ on their supports, we have that $\pi_{(t)}(\cdot)=\pi(\cdot)$ on $(s_{k-1},s_k)$ implies that the same holds on $[s_{k-1},s_k]$. Finally, one can check that $B_k$ is optimal for $P_U(\bd^s_U)$ with $s\in[s_{k-1},s_k]$. The desired $\{s_k\}_{k=0}^{N_d}$ and $\{B_k\}_{k=1}^{N_d}$ are thus obtained.
  
  
\end{proof}\fi

An important implication of Lemma \ref{lem:basic-2} is that the solution path obtained by the perturbation on $\bd_U$ can be divided into multiple paths, each of which is a basic solution mapping. Thus, given Lemma \ref{lem:basic-2}, it suffices to  study the sensitivities of the basic solution mappings.

\subsection{Exponential Decay of Sensitivity}
We now establish our main sensitivity result for the constrained QP, known as exponential decay of sensitivity.
\begin{theorem}[Exponential Decay of Sensitivity]\label{thm:eds}
  Let Assumption \ref{ass:sol} hold. The following holds for $\bd_U,\bd'_U\in \mathbb{D}_U$:
  \begin{align*}
    \left\Vert T_{i\leftarrow U}\left(\bz^*_{U}( \bd)-\bz^*_{U}( \bd')\right)\right\Vert &= \sum_{j\in U}\Gamma_U \rho_U^{\lceil \frac{\Delta_U(i,j)-1}{2}\rceil}\Vert d_{j}-d'_{j}\Vert 
  \end{align*}
with $\Gamma_U:=\overline{\sigma}_U/\underline{\sigma}^2_U$;
  $\rho_U := (\overline{\sigma}_U^2-\underline{\sigma}_U^2)/(\overline{\sigma}_U^2+\underline{\sigma}_U^2)$;
  $\displaystyle\underline{\sigma}_U:=\min_{B\in\mathbb{B}_U}  \sigma_{\min}(\bH_U[B,B])$;
  $\displaystyle\overline{\sigma}_U:=\max_{B\in\mathbb{B}_U} \sigma_{\max}(\bH_U[B,B])$.
\end{theorem}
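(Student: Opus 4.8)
The plan is to reduce the sensitivity analysis to a single basic solution mapping and then exploit the sparsity structure of the KKT matrix to obtain exponential decay. First I would invoke Lemma~\ref{lem:basic-2}: for fixed $\bd_U,\bd'_U\in\mathbb{D}_U$, the segment $\bd^s_U=(1-s)\bd_U+s\bd'_U$ is partitioned by $0=s_0<\cdots<s_{N_d}=1$ into subintervals on each of which an optimal basis $B_k\in\mathbb{B}_U$ is fixed, so $\bz^*_U(\bd^s_U)=\bz^{B_k}_U(\bd^s_U)$ there. On each such subinterval the basic solution mapping is \emph{affine} in $s$, with the nonzero block governed by $\bH_U[B_k,B_k]^{-1}$ acting on $\bd^s_U[B_k]$. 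A telescoping/triangle-inequality argument across the breakpoints then shows it suffices to bound, for a single basis $B\in\mathbb{B}_U$, the quantity $\Vert T_{i\leftarrow U}(\bz^B_U(\bd_U)-\bz^B_U(\bd'_U))\Vert$ in terms of $\sum_j\rho_U^{\lceil(\Delta_U(i,j)-1)/2\rceil}\Vert d_j-d'_j\Vert$; summing the per-segment bounds and using that the right-hand side depends only on $d_j-d'_j$ collapses the sum over $k$ back to a single instance of the stated bound. (One must be mildly careful that the perturbation $d^{s_k}_j-d^{s_{k-1}}_j$ summed over $k$ telescopes to $d_j-d'_j$ componentwise, which it does since $s\mapsto d^s_j$ is affine.)

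The heart of the matter is therefore a bound on the blocks of $M^{-1}$ where $M=\bH_U[B,B]$, in terms of graph distance. The key observations are: (i) $\bH_U$ inherits the sparsity of $\mathcal{G}$ in the sense that the $(i,j)$ block of $H_{i,j}$ vanishes unless $i=j$ or $\{i,j\}\in\mathcal{E}$ (since $Q_{i,j}$ and $A_{i,j}$ are nonzero only for such pairs), and (ii) this ``graph-banded'' structure is inherited by principal submatrices $\bH_U[B,B]$, so powers $M^p$ have their $(i,j)$ block equal to zero whenever the graph distance $\Delta_U(i,j)>p$. I would then use the standard Chebyshev-polynomial argument for bounding the decay of $M^{-1}$: writing $M^\top M$ with eigenvalues in $[\underline{\sigma}_U^2,\overline{\sigma}_U^2]$, one approximates the inverse by a polynomial in $M^\top M$ (equivalently $M^{-1}=(M^\top M)^{-1}M^\top$), and the scaled Chebyshev polynomial of degree $p$ on $[\underline{\sigma}_U^2,\overline{\sigma}_U^2]$ gives a uniform error of order $\rho_U^{p}$ with the displayed $\rho_U=(\overline{\sigma}_U^2-\underline{\sigma}_U^2)/(\overline{\sigma}_U^2+\underline{\sigma}_U^2)$ and prefactor controlled by $\overline{\sigma}_U/\underline{\sigma}_U^2$. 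A polynomial of degree $p$ in $M^\top M$ is a polynomial of degree $2p$ in entries coupled at graph distance $1$, hence has zero $(i,j)$ block once $\Delta_U(i,j)>2p$; so to make the $(i,j)$ block of the polynomial approximant agree with zero we need $2p<\Delta_U(i,j)$, i.e. we may take $p=\lceil(\Delta_U(i,j)-1)/2\rceil$, which is exactly the exponent in the theorem. Combining: the $(i,j)$ block of $M^{-1}$ has norm at most $\Gamma_U\rho_U^{\lceil(\Delta_U(i,j)-1)/2\rceil}$, and summing against $\Vert d_j-d'_j\Vert$ yields the claim.

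A few points need care. The quantities $\underline{\sigma}_U,\overline{\sigma}_U$ are taken as min/max over the finite set $\mathbb{B}_U$, so the same $\rho_U,\Gamma_U$ work uniformly for every basis appearing along the path — this is what makes the telescoping clean. One must also confirm that the components of $\bz^B_U$ outside $B$ are identically zero, so the ``sensitivity'' there is trivially zero and contributes nothing, while the $B$-block is exactly $M^{-1}\bd_U[B]$; restricting to the rows indexed by node $i$ (the operator $T_{i\leftarrow U}$) picks out the corresponding block rows of $M^{-1}$, and the columns are distributed among the nodes $j$ according to which components of $\bd_U[B]$ they own, giving the sum over $j\in U$. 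The main obstacle I anticipate is the bookkeeping in the Chebyshev step: making precise that ``polynomial of degree $p$ in $M^\top M$'' has the claimed block-sparsity (requires $M^\top$ to also be graph-banded with bandwidth $1$, which holds since $M$ is, as $\bH_U$ is symmetric up to the block transpose structure built into $H_{i,j}$), and getting the ceiling/parity in $\lceil(\Delta_U(i,j)-1)/2\rceil$ exactly right rather than off by one. The error-norm bound for the optimal Chebyshev approximation to $1/t$ on $[\underline{\sigma}_U^2,\overline{\sigma}_U^2]$, together with the extra factor $\overline{\sigma}_U$ from the trailing $M^\top$ and the normalization $1/\underline{\sigma}_U^2$, is a known computation (this is essentially the argument used for the unconstrained case in \cite{shin2020decentralized}); the new content here is routing it through Lemma~\ref{lem:basic-2} so that it applies to the constrained, inequality-laden KKT system.
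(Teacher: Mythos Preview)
Your proposal matches the paper's proof: Lemma~\ref{lem:basic-2} reduces the problem to basic solution mappings, the block-decay bound on $\bH_U[B,B]^{-1}$ (the paper's Lemma~\ref{lem:inv}, via Lemma~\ref{lem:claim} for graph-banded sparsity of powers) supplies the factor $\Gamma_U\rho_U^{\lceil(\Delta_U(i,j)-1)/2\rceil}$, and telescoping over the segments $[s_{k-1},s_k]$ finishes exactly as you describe. One small note: the paper obtains $\rho_U$ not by a Chebyshev approximation but by the Neumann series $\widetilde{\bH}^{-1}=\frac{2}{\underline{\sigma}_U^2+\overline{\sigma}_U^2}\sum_{q\ge0}\widetilde{\bH}\bigl(I-\frac{2}{\underline{\sigma}_U^2+\overline{\sigma}_U^2}\widetilde{\bH}^2\bigr)^q$ (since $\widetilde{\bH}=\bH_U[B,B]$ is symmetric, your $M^\top M$ is just $\widetilde{\bH}^2$ and the two write-ups coincide), which is what yields exactly the stated $\rho_U=(\overline{\sigma}_U^2-\underline{\sigma}_U^2)/(\overline{\sigma}_U^2+\underline{\sigma}_U^2)$; a genuine Chebyshev argument would give the sharper rate $(\overline{\sigma}_U-\underline{\sigma}_U)/(\overline{\sigma}_U+\underline{\sigma}_U)$, so either relabel the step or note that you are improving the constant.
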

Here $\lceil\cdot\rceil$ denotes the ceiling operator, and $\Delta_U(i,j)$ denotes the geodesic distance between $i,j\in U$ on the subgraph of $\mathcal{G}(\mathcal{V},\mathcal{E})$ induced by $U$ (the number of elements in the shortest path $\{e_q\in\mathcal{E}: e_q\subseteq  U\}_{q=1}^{n_e}$ from $i$ to $j$); $\sigma_{\min}$ and $\sigma_{\max}$ denote the minimum and maximum singular values of their matrix argument.

 To facilitate the discussion, we define for $M\in\mathbb{R}^{|B|\times |B|}$ and $v\in\mathbb{R}^{|B|}$ the following: $M_{[i][j]}:=M[\mathcal{I}_i,\mathcal{I}_j]$, $v_{[i]}:=v[\mathcal{I}_i]$, where $\mathcal{I}_i:=\{q\in\mathbb{I}_{1:|B|}:b_q\in\{\sum_{k\in U,k<i}n_k+1,\cdots,\sum_{k\in U,k\leq i}n_k\}\}$, and $B=\{b_1<\cdots<b_{|B|}\}$. Note that $\mathcal{I}_{i}$ is the index set that extracts the indices associated with $z_i$, and $\{\mathcal{I}_{i}\}_{i\in U}$ partitions $\mathbb{I}_{1:|B|}$. 

\iffull\begin{lemma}\label{lem:claim}
  If $\Delta_U(i,j)>q\in\mathbb{I}_{>0}$, $({\bH}_U[B,B]^q)_{[i][j]} = 0$.
\end{lemma}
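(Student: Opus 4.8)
The plan is to proceed by induction on $q$, exploiting the fact that the block sparsity pattern of $\bH_U[B,B]$ is inherited directly from the graph structure of $\bH_U$. The starting observation is that each block $H_{i,j}$ is assembled solely from $Q_{i,j}$, $A^{\mathcal{E}}_{i,j}$, $A^{\mathcal{I}}_{i,j}$, which are present only for $j\in N_U[i]$; hence $H_{i,j}=0$ whenever $j\notin N_U[i]$, i.e.\ whenever $\Delta_U(i,j)\geq 2$. Since $\bH_U[B,B]_{[i][j]}$ is by construction the submatrix of $H_{i,j}$ retaining only the rows and columns indexed by $B$, this immediately gives $\bH_U[B,B]_{[i][j]}=0$ whenever $\Delta_U(i,j)\geq 2$, which is exactly the base case $q=1$ (equivalently, in contrapositive form: $\bH_U[B,B]_{[i][j]}\neq 0$ forces $\Delta_U(i,j)\leq 1$).

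For the inductive step I would set $M:=\bH_U[B,B]$ and note that, since $\{\mathcal{I}_i\}_{i\in U}$ partitions $\mathbb{I}_{1:|B|}$ (with some blocks possibly empty), the block-multiplication identity $(M^{q+1})_{[i][j]}=\sum_{k\in U}(M^{q})_{[i][k]}\,M_{[k][j]}$ holds. Fix $i,j\in U$ with $\Delta_U(i,j)>q+1$ and suppose some term $(M^{q})_{[i][k]}\,M_{[k][j]}$ were nonzero. Then $M_{[k][j]}\neq 0$ forces $\Delta_U(k,j)\leq 1$ by the base case, and $(M^{q})_{[i][k]}\neq 0$ forces $\Delta_U(i,k)\leq q$ by the inductive hypothesis (contrapositive). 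Concatenating a shortest $i$--$k$ walk with a shortest $k$--$j$ walk on the induced subgraph yields a walk of length at most $q+1$, so $\Delta_U(i,j)\leq q+1$, contradicting $\Delta_U(i,j)>q+1$. Hence every summand vanishes, so $(M^{q+1})_{[i][j]}=0$, closing the induction.

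I do not anticipate a serious obstacle: this is the standard ``the $q$-th power of a graph-structured matrix sees only walks of length $\leq q$'' fact, transcribed into the present block notation. The two points I would be careful about are (i) checking that block multiplication over the partition $\{\mathcal{I}_i\}_{i\in U}$ is legitimate even when some $\mathcal{I}_i$ are empty (empty blocks simply contribute nothing to the sum), and (ii) phrasing the distance estimate through the contrapositive, as above, rather than through a subtraction inequality $\Delta_U(i,k)\geq \Delta_U(i,j)-\Delta_U(k,j)$, so that the degenerate case in which $i$ and $k$ lie in different components of the induced subgraph (geodesic distance $+\infty$) is absorbed automatically rather than needing separate treatment.
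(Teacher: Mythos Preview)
Your proposal is correct and follows essentially the same route as the paper: induction on $q$, with the base case coming from $H_{i,j}=0$ whenever $\Delta_U(i,j)>1$ and the inductive step handled by block multiplication plus the triangle inequality for $\Delta_U$. Your contrapositive phrasing of the distance estimate and your remarks about empty blocks and disconnected components are minor refinements over the paper's version, not a different argument.
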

\begin{proof}
  We use the notation $\widetilde{\bH}:=\bH_U[B,B]$. We proceed by induction. From the fact that $H_{i,j}=0$ if $\Delta_U(i,j)>1$, one can observe that $\widetilde{\bH}_{[i][j]}=0$ if $\Delta_U(i,j)>1$. Hence, the claim holds for $q=1$. Now suppose the claim holds for $q=q'$. One can easily see that triangle inequality holds for distance $\Delta_U(\cdot,\cdot)$. From triangle inequality, if $\Delta_U(i,j)>q'+1$, either $\Delta_U(i,l)>q'$ or $\Delta_U(j,l)>1$ holds for any $l\in U$. Thus, if $\Delta_U(i,j)>q'+1$, then $(\widetilde{\bH}^{q'+1})_{[i][j]} = \sum_{l\in U} (\widetilde{\bH}^{q'})_{[i][l]}\widetilde{\bH}_{[l][j]} = 0$. 
\end{proof}\fi

\begin{lemma}\label{lem:inv}
  \small $\Vert( \bH_U[B,B]^{-1})_{[i][j]}\Vert \leq \Gamma_U \rho_U^{\lceil \frac{\Delta_U(i,j)-1}{2}\rceil}$, if $B\in\mathbb{B}_U$.
\end{lemma}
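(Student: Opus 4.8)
The plan is to transfer the classical Neumann/Chebyshev argument for exponential decay of the inverse of a banded matrix---used for the unconstrained case in \cite{shin2020decentralized}---to the basis-reduced KKT matrix $\widetilde{\bH}:=\bH_U[B,B]$. Since $Q_{i,j}=Q_{j,i}^\top$, the matrix $\bH_U$ is symmetric and hence so is $\widetilde{\bH}$; because $B\in\mathbb{B}_U$ is a basis, $\widetilde{\bH}$ is nonsingular. Moreover $\sigma_{\min}(\widetilde{\bH})\ge\underline{\sigma}_U>0$ and $\sigma_{\max}(\widetilde{\bH})\le\overline{\sigma}_U$ (positivity and attainment of the extrema use finiteness of $\mathbb{B}_U$, which holds since $U$, and hence $n_U$, is finite). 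Therefore $\widetilde{\bH}^2$ is symmetric positive definite with spectrum in $[a,b]$, where $a:=\underline{\sigma}_U^2$ and $b:=\overline{\sigma}_U^2$.

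First I would introduce $R:=I-\tfrac{2}{a+b}\widetilde{\bH}^2$, whose eigenvalues are $1-\tfrac{2}{a+b}\mu$ for $\mu$ in the spectrum of $\widetilde{\bH}^2$ and therefore lie in $[-\rho_U,\rho_U]$, so $\Vert R\Vert\le\rho_U<1$. Expanding $(\widetilde{\bH}^2)^{-1}=\tfrac{2}{a+b}\sum_{m\ge0}R^m$ and truncating after $m=k$ yields $p_k:=\tfrac{2}{a+b}\sum_{m=0}^{k}R^m$, a polynomial in $\widetilde{\bH}$ of degree at most $2k$, and a remainder $E:=(\widetilde{\bH}^2)^{-1}-p_k$ with $\Vert E\Vert\le\tfrac{2}{a+b}\cdot\tfrac{\rho_U^{k+1}}{1-\rho_U}=\rho_U^{k+1}/\underline{\sigma}_U^2$, where I used $1-\rho_U=2a/(a+b)$.

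Next I would write $\widetilde{\bH}^{-1}=\widetilde{\bH}(\widetilde{\bH}^2)^{-1}=\widetilde{\bH}p_k+\widetilde{\bH}E$. The first term is a polynomial in $\widetilde{\bH}$ of degree at most $2k+1$, so Lemma~\ref{lem:claim} forces its $(i,j)$ block to vanish whenever $\Delta_U(i,j)>2k+1$; for such $(i,j)$, $\Vert(\widetilde{\bH}^{-1})_{[i][j]}\Vert=\Vert(\widetilde{\bH}E)_{[i][j]}\Vert\le\Vert\widetilde{\bH}\Vert\,\Vert E\Vert\le\overline{\sigma}_U\rho_U^{k+1}/\underline{\sigma}_U^2=\Gamma_U\rho_U^{k+1}$. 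Taking $k$ to be the largest nonnegative integer with $2k+1<\Delta_U(i,j)$---available precisely when $\Delta_U(i,j)\ge2$, and giving $k+1=\lfloor\Delta_U(i,j)/2\rfloor=\lceil(\Delta_U(i,j)-1)/2\rceil$---produces the asserted inequality. When $\Delta_U(i,j)\in\{0,1\}$ the exponent $\lceil(\Delta_U(i,j)-1)/2\rceil$ is $0$, and the bound follows from the trivial estimate $\Vert(\widetilde{\bH}^{-1})_{[i][j]}\Vert\le\Vert\widetilde{\bH}^{-1}\Vert\le1/\underline{\sigma}_U\le\overline{\sigma}_U/\underline{\sigma}_U^2=\Gamma_U$.

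I do not anticipate a genuine obstacle. The two points requiring care are (i) confirming that $\underline{\sigma}_U$ and $\overline{\sigma}_U$---defined as the worst-case singular values over all $B\in\mathbb{B}_U$---correctly sandwich the spectrum of $\widetilde{\bH}^2$ for the particular basis $B$ at hand, and (ii) the bookkeeping that turns ``degree $\le 2k+1$ polynomial in $\widetilde{\bH}$'' into ``block vanishes for $\Delta_U(i,j)>2k+1$'' via Lemma~\ref{lem:claim}, together with the elementary identity $\lfloor t/2\rfloor=\lceil(t-1)/2\rceil$ for nonnegative integers $t$. One could sharpen the constant by replacing the truncated Neumann series with the optimal polynomial approximant of $1/t$ on $[a,b]$, but this is unnecessary since the target rate $\rho_U=(\overline{\sigma}_U^2-\underline{\sigma}_U^2)/(\overline{\sigma}_U^2+\underline{\sigma}_U^2)$ is exactly the Neumann rate.
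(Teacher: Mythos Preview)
Your proposal is correct and follows essentially the same approach as the paper: both write $\widetilde{\bH}^{-1}=\tfrac{2}{a+b}\sum_{q\ge 0}\widetilde{\bH}\bigl(I-\tfrac{2}{a+b}\widetilde{\bH}^2\bigr)^q$, use Lemma~\ref{lem:claim} to kill the low-degree terms in the $[i][j]$ block, and bound the geometric tail to obtain $\Gamma_U\rho_U^{\lceil(\Delta_U(i,j)-1)/2\rceil}$. The only cosmetic difference is that the paper drops the first $q_0=\lceil(\Delta_U(i,j)-1)/2\rceil$ terms of the infinite series directly, whereas you truncate at $k=q_0-1$ and bound the remainder (and handle $\Delta_U(i,j)\le 1$ separately); the arithmetic is identical.
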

\iffull\begin{proof}
  By definition, $\underline{\sigma}_U^2 I \preceq \widetilde{\bH}^2\preceq  \overline{\sigma}_U^2I$, and thus
 \begin{align}\label{eqn:factor}
   \frac{\underline{\sigma}_U^2-\overline{\sigma}_U^2}{\underline{\sigma}_U^2+\overline{\sigma}_U^2} I\preceq I-\frac{2}{\underline{\sigma}_U^2+\overline{\sigma}_U^2}\widetilde{\bH}^2
   \preceq \frac{-\underline{\sigma}_U^2+\overline{\sigma}_U^2}{\underline{\sigma}_U^2+\overline{\sigma}_U^2} I .
  \end{align}
 Moreover, 
 \begin{subequations}\label{eqn:key-1}
   \begin{align}
     \widetilde{\bH}^{-1} &= \frac{2}{\underline{\sigma}_U^2+\overline{\sigma}_U^2} \widetilde{\bH}\left(\frac{2}{\underline{\sigma}_U^2+\overline{\sigma}_U^2}\widetilde{\bH}^2 \right)^{-1}\\
     &=\frac{2}{\underline{\sigma}_U^2+\overline{\sigma}_U^2} \sum_{q=0}^\infty \widetilde{\bH}\left(I-\frac{2}{\underline{\sigma}_U^2+\overline{\sigma}_U^2}\widetilde{\bH}^2 \right)^q,\label{eqn:key-1-c}
 \end{align}
 \end{subequations}
 where the second equality follows from \cite[Theorem 5.6.9 and Corollay 5.6.16]{horn2012matrix}, \eqref{eqn:factor}, and the fact that the series in \eqref{eqn:key-1-c} converges.  By Lemma \ref{lem:claim}, if $\Delta_U(i,j)>2q+1$, then
 \begin{align}\label{eqn:sparsity}
   \left(\widetilde{\bH}\left(I-\frac{2}{\underline{\sigma}_U^2+\overline{\sigma}_U^2}\widetilde{\bH}^2 \right)^q\right)_{[i][j]}=0
  \end{align}
  holds. By extracting submatrices from \eqref{eqn:key-1}, one establishes
  \begin{align*}
    (\widetilde{\bH}^{-1})_{[i][j]}=\frac{2}{\underline{\sigma}_U^2+\overline{\sigma}_U^2} \sum_{q=q_0}^\infty \left(\widetilde{\bH}\left(I-\frac{2\widetilde{\bH}^2}{\underline{\sigma}_U^2+\overline{\sigma}_U^2} \right)^q\right)_{[i][j]} ,
  \end{align*}
  where $q_0:=\lceil \frac{\Delta_U(i,j)-1}{2}\rceil$ since the summation over $q=0,\cdots,q_0-1$ adds up to zero by \eqref{eqn:sparsity}. Using the triangle inequality and the fact that the matrix norm of a submatrix is always smaller than that of the original matrix, we have
  \begin{align}\label{eqn:Hbound}
    \Vert (\widetilde{\bH}^{-1})_{[i][j]} \Vert &\leq \frac{2}{\underline{\sigma}_U^2+\overline{\sigma}_U^2} \sum_{q=q_0}^\infty \left\Vert\widetilde{\bH}\left(I-\frac{2\widetilde{\bH}^2}{\underline{\sigma}_U^2+\overline{\sigma}_U^2}\right)^q\right\Vert\nonumber\\
    &\leq \frac{2}{\underline{\sigma}_U^2+\overline{\sigma}_U^2} \sum_{q=q_0}^\infty \overline{\sigma}_U\left(\frac{\overline{\sigma}_U^2-\underline{\sigma}_U^2}{\overline{\sigma}_U^2+\underline{\sigma}_U^2}\right)^q\nonumber\\
    &\leq \frac{\overline{\sigma}_U}{\underline{\sigma}_U^2}\left(\frac{\overline{\sigma}_U^2-\underline{\sigma}_U^2}{\overline{\sigma}_U^2+\underline{\sigma}_U^2}\right)^{ \lceil(\Delta_U(i,j)-1)/2\rceil} ,
  \end{align}
  where the second inequality follows from the submultiplicativity of the matrix norm and the fact that the induced 2-norm of a symmetric matrix is equal to its largest eigenvalue; the last inequality follows from geometric series.
\end{proof}\fi

\begin{proof}[Proof of Theorem \ref{thm:eds}]
  From Lemma \ref{lem:basic-2}, we have
  \begin{align}\label{eqn:eds-1}
    \bz^*_{U}( \bd)-\bz^*_{U}( \bd')= \sum_{k=1}^{N_d} \bz^{B_k}_U(\bd^{s_k}_U)-\bz^{B_k}_U(\bd^{s_{k-1}}_U).
  \end{align}
The solution does not {\it jump} when the basis changes, because of solution uniqueness. From the block multiplication formula and $\bd^{s_k}_U-\bd^{s_{k-1}}_U=(s_{k}-s_{k-1})(\bd_U-\bd'_U)$, we have
  \begin{align}\label{eqn:eds-2}
    &\left(\bz^B_U(\bd_U^{s_{k}})[B]-\bz^B_U(\bd_U^{s_{k-1}})[B]\right)_{[i]} \\
    &\quad= \sum_{j\in U}\left(\widetilde{\bH}^{-1}\right)_{[i][j]}(s_{k}-s_{k-1})(\bd_U[B]-\bd'_U[B])_{[j]}.\nonumber
  \end{align}
  From the definition of bases and basic solutions, we have:
  \begin{align}\label{eqn:eds-2.5}
    &\Vert\left(\bz^B_U(\bd_U^{s_{k}})[B]-\bz^B_U(\bd_U^{s_{k-1}})[B]\right)_{[i]}\Vert\\
    &\quad= \left\Vert T_{i\leftarrow U}(\bz^B_{U}( \bd^{s_k})-\bz^B_{U}( \bd^{s_{k-1}}))\right\Vert.\nonumber
  \end{align}
  From \eqref{eqn:eds-2}-\eqref{eqn:eds-2.5}, the triangle inequality, the submultiplicativity of matrix norms, Lemma \ref{lem:inv}, and the fact that the norm of a subvector is not greater than the norm of the original vector, 
  \begin{align}\label{eqn:eds-3}
    &\left\Vert T_{i\leftarrow U}(\bz^B_{U}( \bd^{s_k})-\bz^B_{U}( \bd^{s_{k-1}}))\right\Vert\\
    &\quad\leq \Gamma_U \rho_U^{\lceil(\Delta_U(i,j)-1)/2\rceil}
    \left(s_{k}-s_{k-1})\Vert d_j-d'_j\right\Vert.\nonumber
  \end{align}
  By multiplying \eqref{eqn:eds-1} by $T_{i\leftarrow U}$ and by applying \eqref{eqn:eds-3} and the triangle inequality, we obtain the result.
\end{proof}

The coefficient: 
\begin{equation*}
\Gamma_U \rho_U^{\lceil \frac{\Delta_U(i,j)-1}{2}\rceil}
\end{equation*}
in Theorem \ref{thm:eds} is the {\it componentwise Lipschitz constant} of the solution mapping $\bz^*_U(\cdot)$. Recall that $\rho_U\in(0,1)$; hence, the coefficient decays exponentially with the distance $\Delta_U(i,j)$. Therefore, the effect of the perturbation decays as one moves away from the perturbation location. 

\subsection{Convergence Analysis}
In this subsection, we formally establish the convergence of Algorithm \ref{alg:main}. To facilitate the later discussion, we first introduce some notation: $\bz^\dag_U:=\{z^\dag_i\}_{i\in U}$, where $z^\dag_i:=T_{i\leftarrow V}\bz^*_V(\bd_V)$ and $U\subseteq V$. Furthermore, we define $\Vert\cdot\Vert_{U,\infty}:=\max_{i\in U}\Vert T_{i\leftarrow U}(\cdot)\Vert$ and $\Vert\cdot\Vert_{U,U',1}:=\sum_{i\in U}\sum_{j\in U'}\Vert T_{i\leftarrow U}(\cdot)T^\top_{j\leftarrow U'}\Vert$.

\begin{assumption}\label{ass:sol-2}
  Assumption \ref{ass:sol} holds with $\mathbb{D}_{U}:=\{\bd_{U} - \bH_{-U} \bz_{-U}: \bz_{-U}\in\mathbb{R}^{n_{-U}}\}$ for any $U\subseteq V$.
\end{assumption}
Here $n_{-U}=\sum_{i\in N_V(U)}n_i$. While Assumption \ref{ass:sol-2} is  strong, we believe it can be relaxed; but doing so meaningfully is a technically extensive endeavor beyond the scope of this communication format. We note, however, that it is always satisfied for bound constraints and for augmented Lagrangian reformulations of the original problem by using slacks to obtain only bound inequality constraints and then penalizing all equality constraints, which can approximate the solution set of the original problem arbitrarily well.

\begin{assumption}\label{ass:omega}
  $\omega:=\min_{k\in\mathbb{I}_{1:K}}\Delta_V(V_k,N_V(W_k))-1\geq 1$.
\end{assumption}
Here, we abuse the notation by letting $\Delta_V(U,U'):=\min_{u\in U,u'\in U'}\Delta_V(u,u')$, where $U,U'\subseteq V$.
We call $\omega$ the {\it size of overlap}. Note that an overlapping partition $\{W_k\}_{k=1}^K$ with size $\omega$ can be constructed from a non-overlapping partition $\{V_k\}_{k=1}^K$ by expanding each original subdomain using $W_k=\{i\in V: \Delta_V(i,V_k)\leq \omega\}$.

\begin{assumption}\label{ass:limit}
  $\displaystyle\underline{\sigma}:=\inf_{U\subseteq \mathcal{V}}\underline{\sigma}_U >0$, $\displaystyle\overline{\sigma}:=\sup_{U\subseteq \mathcal{V}}\overline{\sigma}_U <\infty$.
\end{assumption}
Assumption \ref{ass:limit} trivially holds if the parent graph $\mathcal{G}(\mathcal{V},\mathcal{E})$ is finite, but it may be violated if the parent graph is infinite. In particular, $\underline{\sigma}_V$ or $\overline{\sigma}_V$ may tend to zero or infinity as $V$ grows. Checking the validity of Assumption \ref{ass:limit} for the infinite parent graph case is beyond the scope of this paper; sufficient conditions for this to hold will be studied in the future.

We note, however, that Assumptions \ref{ass:sol-2} and \ref{ass:limit} hold for augmented Lagrangian reformulations with bounded data when the objective matrix has bounded entries and is strongly diagonally dominant, which is a way we can approximate most QPs with some regularization; in contrast, Assumption \ref{ass:omega} can be satisfied by construction. Therefore our setup contains a large set of problems or close approximations. 

\begin{lemma}\label{lem:con}
  Let Assumption \ref{ass:sol-2} hold. For any $U\subseteq V$ we have that $\bz^\dag_U = \bz^*_{U}(\bd_{U} - \bH_{-U} \bz^\dag_{-U})$.
\end{lemma}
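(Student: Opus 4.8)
The plan is to show that $\bz^\dag_U$ satisfies the KKT system of the parametric problem $P_U(\bd_U - \bH_{-U}\bz^\dag_{-U})$; since that problem has a unique primal-dual solution under Assumption~\ref{ass:sol-2}, this forces $\bz^\dag_U = \bz^*_U(\bd_U - \bH_{-U}\bz^\dag_{-U})$. Recall $\bz^\dag_U$ is the restriction to $U$ of the solution $\bz^*_V(\bd_V)$ of the full problem $P_V(\bd_V)$, so I would start by writing down the global KKT conditions for $P_V(\bd_V)$ in the node-indexed block form: for each $i\in V$, the stationarity/feasibility block reads $\sum_{j\in N_V[i]} H_{i,j} z^\dag_j = d_i$ (stationarity in $x_i$ and the equality constraints), together with the inequality feasibility $\bA^{\mathcal{I}}_V\bx^\dag_V \ge \bg^{\mathcal{I}}_V$, dual sign conditions $\blambda^{\mathcal{I},\dag}_V \ge 0$, and complementarity. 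The key structural fact is that node $i$ couples only to $N_V[i]$, so the coupling matrix $\bH_V$ is block-sparse with respect to the graph.

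The core step is then to split, for each $i\in U$, the sum $\sum_{j\in N_V[i]} H_{i,j} z^\dag_j$ into the part with $j\in N_U[i]\subseteq U$ and the part with $j\in N_V(i)\setminus U$. The first part is exactly $(\bH_U \bz^\dag_U)$ restricted to block $i$, i.e.\ $\sum_{j\in N_U[i]} H_{i,j} z^\dag_j$, while the second part, summed appropriately over the boundary neighbors, is precisely $(\bH_{-U}\bz^\dag_{-U})_i$ by the definition $\bH_{-U} = \{H_{i,j}\}_{i\in U, j\in N_V(U)}$ and $\bz^\dag_{-U} = \{z^\dag_j\}_{j\in N_V(U)}$. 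Moving the boundary term to the right-hand side gives $\sum_{j\in N_U[i]} H_{i,j} z^\dag_j = d_i - (\bH_{-U}\bz^\dag_{-U})_i$ for all $i\in U$, which is exactly the stationarity-plus-equality block of the KKT system of $P_U(\cdot)$ with the perturbed data vector $\bd_U - \bH_{-U}\bz^\dag_{-U}$. The inequality feasibility, dual nonnegativity, and complementarity conditions for the $U$-indexed constraints are inherited verbatim from those of $P_V(\bd_V)$, since each such condition at node $i\in U$ only involves $x^\dag_j$ and $\lambda^{\mathcal{I},\dag}_i$ for $j\in N_U[i]$ — these are all components retained in $\bz^\dag_U$ and $\bz^\dag_{-U}$.

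Having verified that $\bz^\dag_U$ is a KKT point of $P_U(\bd_U - \bH_{-U}\bz^\dag_{-U})$, I would close the argument by invoking Assumption~\ref{ass:sol-2}: the perturbed data $\bd_U - \bH_{-U}\bz^\dag_{-U}$ lies in $\mathbb{D}_U$ (indeed $\mathbb{D}_U$ is defined to be exactly the set of such perturbations over all $\bz_{-U}\in\mathbb{R}^{n_{-U}}$, and $\bz^\dag_{-U}$ is one such choice), so SOSC and LICQ hold at some solution, convexity ($\bQ_U\succeq 0$) makes KKT points globally optimal, and uniqueness of the primal-dual solution is guaranteed. Therefore the KKT point $\bz^\dag_U$ must coincide with $\bz^*_U(\bd_U - \bH_{-U}\bz^\dag_{-U})$.

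I expect the main (though still modest) obstacle to be purely bookkeeping: carefully matching the block-partitioned KKT residual of the large problem against the definitions of $\bH_U$, $\bH_{-U}$, and $N_V(U)$, and in particular checking that the neighborhood used in the $U$-subproblem constraints, $N_U[i]$, together with the boundary set $N_V(U)$, exactly exhausts $N_V[i]$ for every $i\in U$ with no double counting. A secondary point worth stating explicitly is that no inequality constraint or dual variable indexed by $U$ "reaches outside" the retained information — this is immediate from the graph-coupling structure but should be noted so that the inherited feasibility/complementarity conditions are unambiguous.
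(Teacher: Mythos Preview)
Your proposal is correct and follows essentially the same route as the paper: write the full KKT system for $P_V(\bd_V)$, restrict to the rows indexed by $U$, push the cross-terms $\{H_{i,j}z^\dag_j\}_{j\in N_V(U)}$ to the right-hand side to recognize the KKT system of $P_U(\bd_U-\bH_{-U}\bz^\dag_{-U})$, and conclude by uniqueness under Assumption~\ref{ass:sol-2}. One small slip to fix: the inequality feasibility and complementarity conditions are \emph{not} inherited verbatim---for $i\in U$ the constraint $\sum_{j\in N_V[i]}A^{\mathcal I}_{i,j}x^\dag_j\ge g^{\mathcal I}_i$ involves $x^\dag_j$ for $j\in N_V[i]$ (not merely $j\in N_U[i]$), so the same boundary split you did for stationarity must also be applied there, yielding $\bA^{\mathcal I}_U\bx^\dag_U\ge \bg^{\mathcal I}_U-\bA^{\mathcal I}_{-U}\bx^\dag_{-U}$ and the analogous complementarity relation.
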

\begin{proof}
  Since $P_U(\cdot)$ is a convex QP, the KKT conditions are necessary and sufficient for optimality. By Assumption \ref{ass:sol-2}, the primal-dual solution is unique; therefore, it suffices to prove that $\bz^\dag_U$ satisfies the KKT conditions of $P_U(\bd_{U} - \bH_{-U}  \bz^\dag_{-U})$. From the KKT conditions of $P_V(\bd_V)$, we have
  \begin{align}\label{eqn:kkt-cen}
      &\bQ_V \bx^\dag_V + \bA_V^\top \blambda_V^\dag=\bff_V,\quad
      \bA_V^{\mathcal{E}} \bx_V^\dag=\bg_V^{\mathcal{E}},\quad
      \bA_V^{\mathcal{I}} \bx_V^\dag\geq\bg_V^{\mathcal{I}}\nonumber\\
      &\blambda_V^{\dag,\mathcal{I}} \geq 0,\quad
      \diag(\blambda_V^{\dag,\mathcal{I}}) (\bA_V^{\mathcal{I}} \bx_V^\dag-\bg_V^{\mathcal{I}}) = 0.
  \end{align}
  By extracting the rows associated with $U$ and rearranging equations and inequalities, we obtain
  \begin{align}\label{eqn:kkt-sub}
    &\bQ_{U} \bx^\dag_U + \bA_{U}^\top \blambda^\dag_U = \bff_{U} -\bQ_{-U}\bx^\dag_{-U}- \bA^\top_{-U}\blambda^\dag_{-U}\\
    &\bA^{\mathcal{E}}_{U}\bx^\dag_U =\bg^{\mathcal{E}}_{U} -\bA^{\mathcal{E}}_{-U} \bx^\dag_{-U}\quad
    \bA^{\mathcal{I}}_{U} \bx^\dag_U \geq\bg^{\mathcal{I}}_{U} -\bA^{\mathcal{I}}_{-U}  \bx^\dag_{-U},\nonumber\\
    &\blambda^{\dag,\mathcal{I}}_U \geq 0,\quad\diag(\blambda^{\dag,\mathcal{I}}_U)(\bA^{\mathcal{I}}_{U} \bx^\dag_U -\bg^{\mathcal{I}}_{U} +\bA_{-U}^{\mathcal{I}} \bx^\dag_{-U})= 0.\nonumber
  \end{align}
  Here, note that $\bA_{-U}:=\{A_{i,j}\}_{i\in U, j\in N_V(U)}$ and $\bA^\top_{-U}:=\{A_{j,i}^\top\}_{i\in U,j\in N_V(U)}$ (they are not transpose to each other).
  Conditions \eqref{eqn:kkt-sub} imply that $\bz^\dag_U$ satisfies the KKT conditions for $P_U(\bd_{U} - \bH_{-U} \bz^\dag_{-U})$. Thus, it is the solution.
\end{proof}

Lemma \ref{lem:con} provides a form of consistent subproblems whose solution recovers a piece of the full solution as long as the complementary solution information is accurate. Thus, it justifies the subproblem formulation in Algorithm \ref{alg:main}.


The main result of the paper is stated as follows.

\begin{theorem}[Convergence of Overlapping Schwarz]\label{thm:conv}
  Let Assumptions \ref{ass:sol-2}--\ref{ass:limit} hold. The sequence $\{\bz^{(\ell)}_V\}_{\ell=0}^\infty$ generated by Algorithm \ref{alg:main} satisfies
  \begin{align}\label{eqn:conv}
    \Vert \bz^{(\ell)}_V - \bz^\dag_V\Vert_{V,\infty} \leq \left(\alpha_V(\omega)\right)^{\ell} \Vert \bz^{(0)}_V - \bz^\dag_V\Vert_{V,\infty}.
  \end{align}
  Here, $\alpha_V(\omega):= R_V\Gamma\rho^{\lceil (\omega-1)/2\rceil} $;
  $\Gamma:={\overline{\sigma}}/{\underline{\sigma}^2}$; 
  $\rho := (\overline{\sigma}^2-\underline{\sigma}^2)/(\overline{\sigma}^2+\underline{\sigma}^2)$; and
  $R_V:= \displaystyle\max_{U\subseteq V} \Vert\bH_{-U}\Vert_{U,-U,1}$.
\end{theorem}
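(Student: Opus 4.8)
The plan is to show that one sweep of Algorithm~\ref{alg:main} contracts the error $\bz^{(\ell)}_V-\bz^\dag_V$ by the factor $\alpha_V(\omega)$ in the norm $\|\cdot\|_{V,\infty}$, and then to iterate. The starting point is that $\bz^\dag_V$ is a fixed point of the scheme: applying Lemma~\ref{lem:con} with $U=W_k$ gives $\bz^\dag_{W_k}=\bz^*_{W_k}(\bd_{W_k}-\bH_{-W_k}\bz^\dag_{-W_k})$, and composing with $T_{V_k\leftarrow W_k}$ reproduces the right-hand side of line~3 evaluated at $\bz^{(\ell)}=\bz^\dag$. Subtracting this identity from line~3, for each $k$ and each $i\in V_k$ one gets $z^{(\ell+1)}_i-z^\dag_i=T_{i\leftarrow W_k}\bigl(\bz^*_{W_k}(\bd_{W_k}-\bH_{-W_k}\bz^{(\ell)}_{-W_k})-\bz^*_{W_k}(\bd_{W_k}-\bH_{-W_k}\bz^\dag_{-W_k})\bigr)$. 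By Assumption~\ref{ass:sol-2} both parameter vectors lie in $\mathbb{D}_{W_k}$ (so every subproblem encountered in the algorithm is well posed), so Theorem~\ref{thm:eds} applies, with the two parameters differing in block $j\in W_k$ by $\sum_{l\in N_V(W_k)}H_{j,l}(z^\dag_l-z^{(\ell)}_l)$.

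The next step bounds the resulting sum. This $j$-th block vanishes unless $j\in W_k$ is joined by an edge of $\mathcal{G}$ to some node of $N_V(W_k)$, because $H_{j,l}=0$ unless $\{j,l\}\in\mathcal{E}$. For such a $j$ and any $i\in V_k$, the triangle inequality together with Assumption~\ref{ass:omega} and the fact that geodesic distance in the subgraph induced by $W_k$ is at least that induced by $V$ gives $\Delta_{W_k}(i,j)\ge\Delta_V(i,j)\ge\Delta_V(V_k,N_V(W_k))-1\ge\omega$. Since $x\mapsto\lceil(x-1)/2\rceil$ is nondecreasing, $\rho\in[0,1)$, and $\Gamma_{W_k}\le\Gamma$, $\rho_{W_k}\le\rho$ by Assumption~\ref{ass:limit} (monotonicity of $\overline\sigma/\underline\sigma^2$ and of $(\overline\sigma^2-\underline\sigma^2)/(\overline\sigma^2+\underline\sigma^2)$ in $\overline\sigma$ and $\underline\sigma$), each surviving coefficient $\Gamma_{W_k}\rho_{W_k}^{\lceil(\Delta_{W_k}(i,j)-1)/2\rceil}$ in Theorem~\ref{thm:eds} is at most $\Gamma\rho^{\lceil(\omega-1)/2\rceil}$. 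Pulling that constant out, using the triangle inequality and submultiplicativity, bounding $\|z^\dag_l-z^{(\ell)}_l\|\le\|\bz^{(\ell)}_V-\bz^\dag_V\|_{V,\infty}$ for $l\in N_V(W_k)\subseteq V$, and recognizing $\sum_{j\in W_k}\sum_{l\in N_V(W_k)}\|H_{j,l}\|=\|\bH_{-W_k}\|_{W_k,-W_k,1}\le R_V$, one obtains $\|z^{(\ell+1)}_i-z^\dag_i\|\le\alpha_V(\omega)\|\bz^{(\ell)}_V-\bz^\dag_V\|_{V,\infty}$. Because $\{V_k\}_{k=1}^K$ partitions $V$, this holds for every $i\in V$; taking the maximum over $i$ yields $\|\bz^{(\ell+1)}_V-\bz^\dag_V\|_{V,\infty}\le\alpha_V(\omega)\|\bz^{(\ell)}_V-\bz^\dag_V\|_{V,\infty}$, and applying this $\ell$ times gives \eqref{eqn:conv}.

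I expect the main obstacle to be this middle step: pinning down that the perturbation entering each subproblem is supported only on the inner-boundary nodes of $W_k$, and driving the worst-case sensitivity exponent down to $\lceil(\omega-1)/2\rceil$ via the distance estimate $\Delta_{W_k}(i,j)\ge\omega$. The remaining ingredients---replacing $\Gamma_{W_k},\rho_{W_k}$ by the uniform $\Gamma,\rho$ using Assumption~\ref{ass:limit}, and collapsing the double sum of $\|H_{j,l}\|$ into $R_V$---are routine bookkeeping.
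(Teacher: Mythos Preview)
Your proposal is correct and follows essentially the same route as the paper's proof: use Lemma~\ref{lem:con} to cast $\bz^\dag$ as a fixed point, apply Theorem~\ref{thm:eds} to the subproblem difference, observe that only the inner-boundary blocks of the perturbation survive and that those blocks sit at $\Delta_{W_k}(i,j)\ge\omega$ from every $i\in V_k$, and then absorb the constants via Assumption~\ref{ass:limit} and the definition of $R_V$. The paper leaves the step $\Gamma_{W_k}\le\Gamma$, $\rho_{W_k}\le\rho$ implicit (``One can show that $\max_k\alpha_k\le\alpha_V(\omega)$''), whereas you spell out the monotonicity argument; otherwise the arguments coincide.
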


\begin{proof}
  By Lemma \ref{lem:con}, we have that, for any $i \in V_k$,
  \begin{align*}
    \left\Vert z^{(\ell+1)}_i - z^\dag_i\right\Vert
    &=\Big\Vert T_{i\leftarrow W_k}\bz^*_{W_k}(\bd_{W_k}-\bH_{-W_k} \bz^{(\ell)}_{-W_k})\\
    &\quad\quad-T_{i\leftarrow W_k}\bz^*_{W_k}(\bd_{W_k}-\bH_{-W_k} \bz^{\dag}_{-W_k}) \Big\Vert \\
    &\leq \sum_{j\in W_k} \Gamma_{W_k}\rho^{\lceil (\Delta_{W_k}(i,j)-1)/2\rceil}_{W_k}\\
    &\quad\quad\times\left\Vert T_{j\leftarrow W_k}\bH_{-W_k} (\bz^{(\ell)}_{-W_k}-\bz^\dag_{-W_k})\right\Vert.
  \end{align*}
  Here, the inequality follows from Theorem \ref{thm:eds}.  A key observation is that $T_{j\leftarrow W_k}\bH_{-W_k}\neq 0$ only if $\Delta_V(j,N_V(W_k))=1$. Such a $j$ satisfies $\Delta_{W_k}(i,j)\geq\omega$, for any $i \in V_k$, by the definition of $\omega$, the triangle inequality for $\Delta_V(\cdot,\cdot)$, and $\Delta_{W_k}(i,j)\geq \Delta_{V}(i,j)$. Therefore, for any $i\in V$, we have:
\begin{align*}
  &\left\Vert z^{(\ell+1)}_i - z^\dag_i\right\Vert\\
  &\leq\max_{k\in\mathbb{I}_{1:K}}\underbrace{\Vert \bH_{-W_k}\Vert_{W_k,-W_k,1} \Gamma_{W_k}\rho^{\lceil \frac{\omega-1}{2}\rceil}_{W_k}}_{\alpha_k}
  \left\Vert\bz^{(\ell)}_V-\bz^\dag_V \right\Vert_{V,\infty}
\end{align*}
One can show that $\max_{k\in\mathbb{I}_{1:K}}\alpha_k\leq \alpha_V(\omega)$ with Assumptions \ref{ass:omega}--\ref{ass:limit}. This implies $\Vert \bz^{(\ell+1)}_V- \bz^\dag_V \Vert_{V,\infty} \leq \alpha_V(\omega) \Vert \bz^{(\ell)}_V- \bz^*_V \Vert_{V,\infty}$, which yields \eqref{eqn:conv}.
\end{proof}

The upper bound of convergence rate $\alpha_V(\omega)$ decays with an increase in the size of overlap $\omega$ (recall that $\rho\in(0,1)$). In the case of a finite parent graph, Algorithm \ref{alg:main} converges if $\omega$ is sufficiently large, since either (i) $\alpha_V(\omega)<1$ or (ii) each subproblem becomes the full problem (the solution is obtained in one iteration). 

If the parent graph is infinite (such a setting is relevant for time grids, unbounded physical space domains, and scenario trees), we can make $V$ arbitrarily large, and thus the limiting behavior of $\alpha_V$ is of interest. Theorem \ref{thm:conv} suggests that constant $\Gamma$ and the exponential decay factor $\rho$ do not deteriorate as $V$ grows, but $R_V$ may grow with $V$. Here, $R_V$ represents the maximum coupling between the subdomains $U\subseteq V$ with its surroundings $V\setminus U$. Accordingly, the growth of $R_V$ is determined by the topology (as long as $\Vert H_{i,j}\Vert$ are uniformly bounded). In particular, if the parent graph $\mathcal{G}(\mathcal{V},\mathcal{E})$ is finite-dimensional (e.g., grid points in $\mathbb{I}^{d}$ with $d<\infty$), $R_V$ grows in a polynomial manner with the diameter of $V$. In such a case, a sufficiently large $\omega$ can be found so that the decay of $\rho^{\lceil (\omega-1)/2\rceil}$ offsets the growth of $R_V$ (an exponentially decaying function times a polynomial converges). On the other hand, in the case of scenario trees, where $R_V$ may grow exponentially, the upper bound derived in Theorem \ref{thm:conv} may not provide a tight upper bound, and $\alpha_V(\omega)$ may diverge as $V$ grows no matter how large $\omega$ is.

Note that there exists an inherent {\em trade-off} between the convergence rate and subproblem complexity. The convergence rate improves with $\omega$ but the subproblem solution times also increase with $\omega$. Therefore, to achieve maximum performance, one needs to tune $\omega$.
\vspace{-0.1in}
\subsection{Monitoring Convergence}
Convergence can be monitored by checking the residuals to the KKT conditions of the full problem $P_V(\bd_V)$. However, a more convenient surrogate of the full KKT residuals can be derived as follows.
\begin{proposition} Suppose that Assumptions \ref{ass:sol-2}--\ref{ass:omega} hold, and let $\{\bz_V^{(\ell)}\}_{\ell=0}^\infty$ be a sequence generated by Algorithm \ref{alg:main} and $\bz^{(\ell,k)}_{W_k}:=\bz^*_{W_k}(\bd_{W_k}-\bH_{-W_k}\bz^{(\ell-1)}_{-W_k})$. Then $\bz_V^{(\ell)}\rightarrow \bz^\dag_V$ as $\ell\rightarrow \infty$ if the following holds for $k\in\mathbb{I}_{1:K}$:
  \begin{align}\label{eqn:residual}
  E^{(\ell)}_k :=  \bz^{(\ell,k)}_{N_V(V_k)} - \bz^{(\ell)}_{N_V(V_k)}\rightarrow 0,\;\text{as}\;\ell\rightarrow\infty.
\end{align}
\end{proposition}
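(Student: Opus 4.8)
The plan is to show that the limit condition $E^{(\ell)}_k \to 0$ forces the complementary solution information fed into each subproblem to converge to the "correct" values $\bz^\dag_{-W_k}$, and then invoke Lemma \ref{lem:con} together with continuity of the solution mapping $\bz^*_{W_k}(\cdot)$. First I would unpack the definitions: $\bz^{(\ell,k)}_{W_k}$ is exactly the output of the $k$th subproblem solve at iteration $\ell$, and line 3 of Algorithm \ref{alg:main} sets $\bz^{(\ell)}_{V_k} = T_{V_k\leftarrow W_k}\bz^{(\ell,k)}_{W_k}$. Hence on the original subdomain $V_k$ the restricted subproblem solution and the assembled iterate agree. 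The residual $E^{(\ell)}_k$ compares the subproblem solution and the assembled iterate on $N_V(V_k)$, which (by Assumption \ref{ass:omega}, since $\omega \ge 1$) lies inside $W_k$; so $E^{(\ell)}_k$ measures precisely the disagreement on the overlap $W_k \setminus V_k$ between what subproblem $k$ computed and what the neighboring subproblems contributed to the global iterate.

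Next I would argue that $E^{(\ell)}_k \to 0$ for all $k$ implies $\bz^{(\ell)}_V$ is Cauchy, or more directly that any limit point $\bz^\infty_V$ of the sequence is a fixed point of the Schwarz iteration. The key step: pass to the limit in line 3. Using continuity of $\bz^*_{W_k}(\cdot)$ (guaranteed by Assumption \ref{ass:sol-2}, since under SOSC+LICQ the solution mapping of a parametric QP is continuous — indeed piecewise affine, as Lemma \ref{lem:basic-2} shows), along a convergent subsequence we get $\bz^\infty_{V_k} = T_{V_k\leftarrow W_k}\bz^*_{W_k}(\bd_{W_k} - \bH_{-W_k}\bz^\infty_{-W_k})$. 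Combined with $E^{(\ell)}_k \to 0$, the full vector $\bz^*_{W_k}(\bd_{W_k}-\bH_{-W_k}\bz^\infty_{-W_k})$ agrees with $\bz^\infty_{W_k}$ on all of $W_k$ (it agrees on $V_k$ by the iteration and on $N_V(V_k) \supseteq W_k\setminus V_k$ by the vanishing residual — one needs to check $W_k \setminus V_k \subseteq N_V(V_k)$, which holds iteratively by expanding layer by layer, using $\omega \ge 1$). Therefore $\bz^\infty_{W_k} = \bz^*_{W_k}(\bd_{W_k}-\bH_{-W_k}\bz^\infty_{-W_k})$ for every $k$, i.e. $\bz^\infty_V$ is a consistent solution in the sense of Lemma \ref{lem:con}. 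Since the full problem $P_V(\bd_V)$ has a unique solution (Assumption \ref{ass:sol-2}) and the collection of consistent local solutions uniquely determines it, $\bz^\infty_V = \bz^\dag_V$. Finally I would use Theorem \ref{thm:conv}: once the overlap is such that $\alpha_V(\omega) < 1$, the iteration is a contraction toward $\bz^\dag_V$, so the whole sequence converges (not merely subsequentially); alternatively, if $\alpha_V(\omega)\ge 1$ one still gets convergence via the fixed-point/uniqueness argument provided one shows the sequence is bounded, which follows because the residual condition bounds the deviation on the overlap and Theorem \ref{thm:eds} bounds the propagation.

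The main obstacle I expect is the bookkeeping that links the "surrogate" residual $E^{(\ell)}_k$ on the one-layer neighborhood $N_V(V_k)$ to the full overlap $W_k \setminus V_k$: one must show that vanishing disagreement on the first overlapping layer, propagated through the recursive structure of the subdomains and combined with the subproblem optimality (Lemma \ref{lem:con} restricted to nested subdomains $V_k \subseteq V_k\cup N_V(V_k) \subseteq \cdots \subseteq W_k$), forces vanishing disagreement everywhere in $W_k$. Concretely, for a node $i \in W_k$ at distance $t$ from $V_k$, one induces on $t$: the subproblem solution on the ball of radius $t$ is itself a consistent solution of the smaller problem with boundary data from radius $t{+}1$, and the residual at radius $t$ controls the mismatch, which by Theorem \ref{thm:eds} decays but — crucially — must be shown to vanish in the limit rather than merely be small. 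A secondary subtlety is justifying the subsequence-to-full-sequence passage without assuming $\alpha_V(\omega)<1$ a priori; the cleanest route is probably to note that $E^{(\ell)}_k \to 0$ plus the exponential-decay bound of Theorem \ref{thm:eds} gives $\Vert \bz^{(\ell+1)}_V - \bz^\dag_V\Vert_{V,\infty} \le \alpha_V(\omega)\Vert \bz^{(\ell)}_V - \bz^\dag_V\Vert_{V,\infty} + c\max_k \Vert E^{(\ell)}_k\Vert$, and then apply a standard perturbed-contraction lemma, which delivers convergence whenever $\alpha_V(\omega)<1$ and gracefully reduces to the one-shot case otherwise.
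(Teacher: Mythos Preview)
Your route diverges from the paper's and runs into a real obstacle. The paper never tries to establish the fixed-point identity $\bz^\infty_{W_k}=\bz^*_{W_k}(\bd_{W_k}-\bH_{-W_k}\bz^\infty_{-W_k})$ on all of $W_k$. Instead it works directly with KKT residuals: since $N_V[V_k]\subseteq W_k$ (Assumption~\ref{ass:omega}), extracting from the subproblem KKT system the rows indexed by $V_k$ yields exactly the $V_k$-block of \eqref{eqn:kkt-sub} with boundary data $\bz^{(\ell,k)}_{N_V(V_k)}$. The assembled iterate satisfies those same rows with boundary data $\bz^{(\ell)}_{N_V(V_k)}$ up to a residual, and the mismatch between the two boundary vectors is precisely $E^{(\ell)}_k$. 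Because the KKT residuals are continuous in the boundary data, $E^{(\ell)}_k\to 0$ forces every limit point of $\{\bz^{(\ell)}_V\}$ to satisfy the full KKT system \eqref{eqn:kkt-cen}; uniqueness of the primal--dual solution then finishes the argument. The point is that only $N_V[V_k]$ is ever needed---no information about deeper layers of $W_k\setminus V_k$ enters.

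Your plan, by contrast, requires agreement on the entire overlap $W_k\setminus V_k$, and as you yourself note, $W_k\setminus V_k\subseteq N_V(V_k)$ fails whenever $\omega>1$. The layer-by-layer induction you sketch to bridge this gap is not clearly sound: Theorem~\ref{thm:eds} supplies Lipschitz-type sensitivity \emph{bounds}, not identities, so a vanishing mismatch on the first layer does not obviously propagate to vanishing mismatch on deeper layers without further structure. Moreover, your fallback to Theorem~\ref{thm:conv} and a perturbed-contraction estimate is unavailable here: the Proposition assumes only Assumptions~\ref{ass:sol-2}--\ref{ass:omega}, not Assumption~\ref{ass:limit}, so neither $\alpha_V(\omega)$ nor the contraction result can be invoked. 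The paper's KKT-residual argument avoids both difficulties at once.
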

\iffull\begin{proof}
  We make two observations. (i) The KKT conditions of $P_{W_k}(\bd_{W_k}-\bH_{-W_k}\bz^{(\ell-1)}_{-W_k})$ and the fact that $N_V[V_k]\subseteq W_k$ (from Assumption \ref{ass:omega}) imply that \eqref{eqn:kkt-sub} holds when we replace $U\leftarrow V_k$, $\bz^\dag_{U} \leftarrow \bz^{(\ell,k)}_{V_k}$, and $\bz^\dag_{-U} \leftarrow \bz^{(\ell,k)}_{N_V(V_k)}$, for any $k\in\mathbb{I}_{1:K}$. (ii) The residual of the KKT systems can be obtained from the residuals to \eqref{eqn:kkt-sub} by replacing $U\leftarrow V_k$, $\bz^\dag_{U} \leftarrow \bz^{(\ell,k)}_{V_k}$, and $\bz^\dag_{-U} \leftarrow \bz^{(\ell)}_{N_V(V_k)}$ and collecting the residuals for $k\in\mathbb{I}_{1:K}$. Note that the residuals of the KKT conditions \eqref{eqn:kkt-sub} are continuous with respect to $\bz^\dag_{-U}$. By using a continuity argument, \eqref{eqn:residual} and observations (i)--(ii), the limit points of $\{\bz^{(\ell)}\}_{\ell=1}^\infty$ satisfy \eqref{eqn:kkt-sub} for $k\in\mathbb{I}_{1:K}$ (i.e., \eqref{eqn:kkt-cen} holds). By the uniqueness of the solution such a limit point is unique, and this implies $\bz_V^{(\ell)}\rightarrow \bz^\dag_V$ as $\ell\rightarrow\infty$.
\end{proof}\fi
We can define primal-dual errors as $\displaystyle\epsilon_{\text{pr}}:=\max_{k\in\mathbb{I}_{1:K}}\Vert \text{pr}(E^{(\ell)}_k)\Vert_\infty$ and $\displaystyle\epsilon_{\text{du}}:=\max_{k\in\mathbb{I}_{1:K}}\Vert \text{du}(E^{(\ell)}_k)\Vert_\infty$, where pr$(\cdot)$ and du$(\cdot)$ extract the indices associated with primal variables and dual variables, respectively. Convergence criteria can thus be set to the following: 
Stop if $(\epsilon_{\text{pr}}<\epsilon_{\text{pr}}^{\text{tol}})\land(\epsilon_{\text{du}}<\epsilon_{\text{du}}^{\text{tol}})$.

\section{Numerical Example}\label{sec:num}
Consider the regularized DC OPF problem \cite{sun2010dc} over a network $\mathcal{G}(\mathcal{V},\mathcal{E})$:
\vspace{-0.1in}
\begin{subequations}\label{eqn:dcopf}
  \begin{align}
    &\min_{\substack{\{\theta_i\}_{i\in \mathcal{V}}\\\{P_q\}_{q\in \Omega}}}\;\sum_{q\in \Omega} c_{q,1}P_{q} +c_{q,2}P^2_q + \frac{\gamma}{2}\sum_{i\in \mathcal{V}} (\theta_i-\theta_j)^2\\
    &\st\; \sum_{q\in \Omega_i} P_q  - \sum_{j\in N_\mathcal{V}[i]} B_{i,j} (\theta_i-\theta_j) = P^L_i,\; i\in \mathcal{V}\\
    &\qquad\underline{P}_q\leq P_{q} \leq \overline{P}_q,\;q\in \Omega,\; \theta_{i} = \theta^{\text{ref}}_i,\; i \in \mathcal{V}^{\text{ref}}\\
    &\qquad-\overline{\theta}_{i,j}\leq \theta_i -\theta_j\leq\overline{\theta}_{i,j},\; \{i,j\}\in \mathcal{E}.\label{eqn:dcopf-angle-limit}
  \end{align}
\end{subequations}
Here, $\Omega_i$ is the set of generators in node $i$; $\Omega:=\bigcup_{i\in \mathcal{V}}\Omega_i$ is the set of all generators; $\mathcal{V}^{\text{ref}}$ is the set of reference nodes; $\theta_i$ are the voltage angles; $P_q$ are the active power generations; $c_{q,1}$ and $c_{q,2}$ are the generation cost coefficients; $\gamma$ is the regularization coefficient; $B_{i,j}$ are the line susceptances; $P^L_i$ are the active power loads; $\underline{P}_q$ and $\overline{P}_q$ are the lower and upper  bounds of active power generations, respectively; $\overline{\theta}_{i,j}$ are the voltage angle separation limits; and $\theta^{\text{ref}}_i$ are reference voltage angles. The problems are modeled in the algebraic modeling language {\tt JuMP} \cite{dunning2017jump} and solved with the nonlinear programming solver {\tt Ipopt} \cite{wachter2006implementation}. The $9,241$-bus test case obtained from pglib-opf (v19.05) is used \cite{babaeinejadsarookolaee2019power}. We modified the data by placing artificial storage with infinite capacity and high charge/discharge cost in each node. The network node set is partitioned into $16$ subdomains using the graph partitioning tool {\tt Metis} \cite{karypis1998fast}, and each subdomain is expanded to obtain an overlapping partition. The Schwarz scheme is run on a multicore parallel computing server (shared memory and 32 CPUs of Intel Xeon CPU E5-2698 v3 running at 2.30 GHz) using the {\tt Julia} package {\tt Distributed.jl}. One master process and 16 worker processes are used (one process per one subproblem). We use $\gamma=10^5$, $\epsilon^{\text{tol}}_{\text{pr}}=10^{-2}$, and $\epsilon^{\text{tol}}_{\text{du}}=10^2$. The scripts to reproduce the results can be found here \url{https://github.com/zavalab/JuliaBox/tree/master/SchwarzQPcons}.

Convergence results are shown in Fig. \ref{fig:results}; we vary the size of overlap $\omega$ and show the evolution of the objective value and primal-dual error. The black dashed line represents the optimal objective value (obtained by solving the problem with {\tt Ipopt}) and the error tolerances. The total computation times and iteration counts are compared in Table \ref{tbl:results}. We can see that increasing $\omega$ accelerates convergence (reduces the iteration count roughly proportionally with the size of the overlap) but does not always reduce the computation time. The reason is that the  subproblem complexity increases with $\omega$; thus, one can see that an optimal overlap exists. {However, the overlapping scheme is still considerably slower than the centralized solver; in particular, {\tt Ipopt} solves the full problem in 1.92 seconds. We expect that the computational savings can achieved if the problem is bigger. In the future, we will investigate the algorithm with several large-scale problems, and study the conditions under which the overlapping scheme becomes more effective.}

\begin{figure}
  \centering
  \includegraphics[width=.45\textwidth,trim={0 .8cm 0 0},clip]{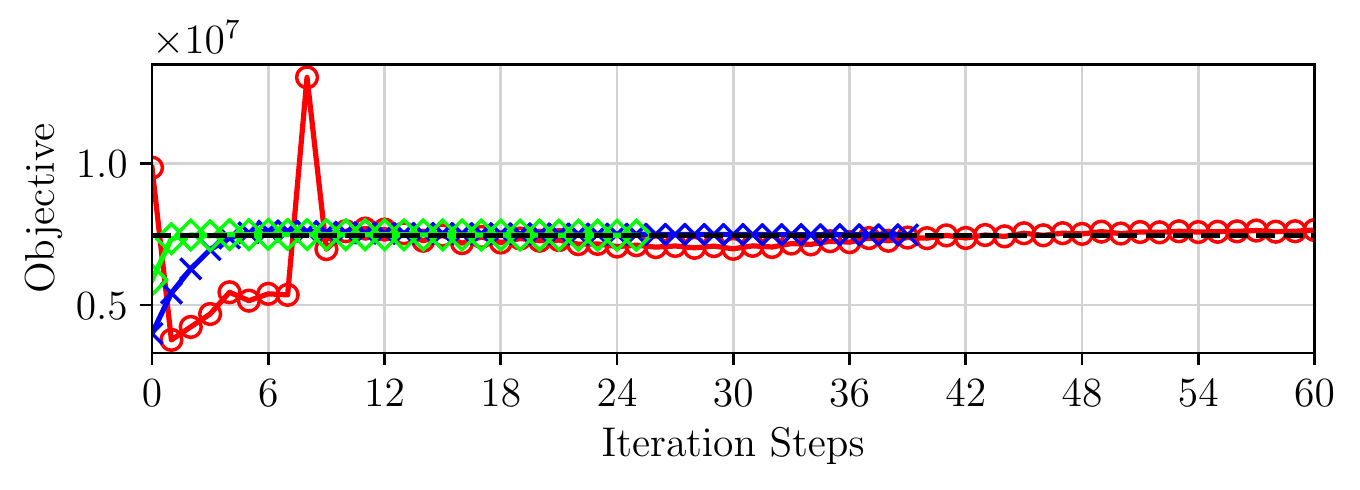}\\
  \includegraphics[width=.45\textwidth,trim={0 .8cm 0 0},clip]{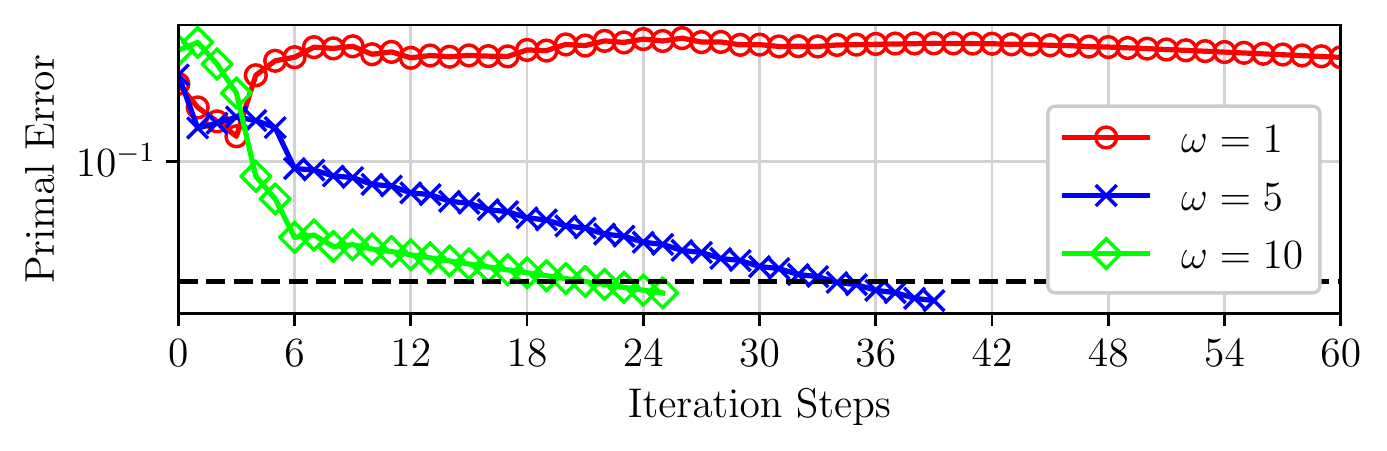}\\
  \includegraphics[width=.45\textwidth,trim={0 0 0 0},clip]{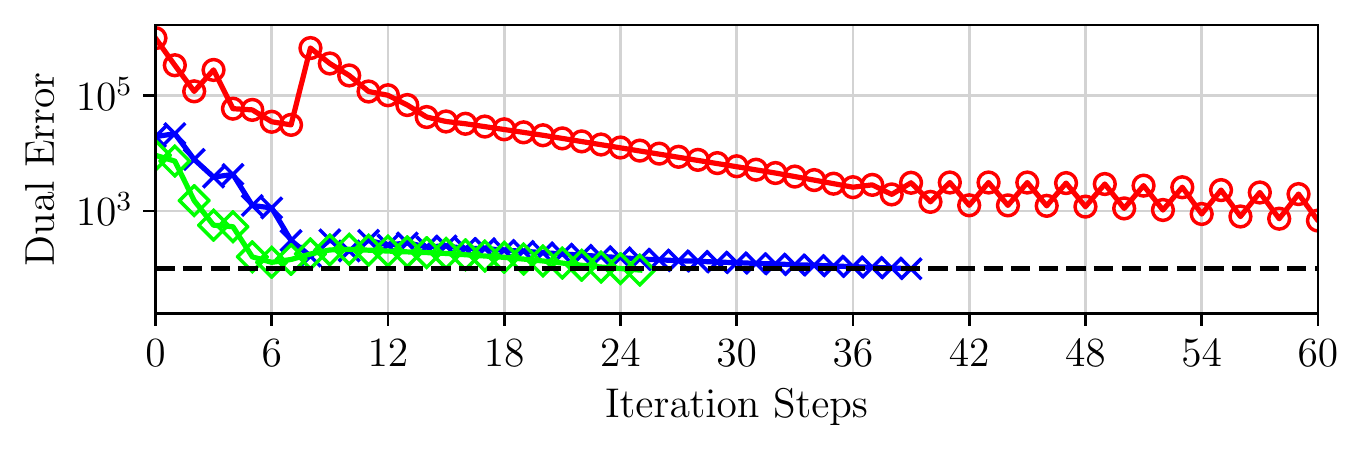}
  \caption{Profiles for objective and primal error for different overlap sizes.\label{fig:results}}
\end{figure}

\begin{table}
  \caption{Effect of overlap on solution times and iteration counts}\label{tbl:results}
  \centering
  \begin{tabular}{|c|c|c|c|c|}
    \hline
    &$\omega=1$&$\omega=5$&$\omega=10$\\
    \hline
    Solution time&
    42.5 sec&
    17.7 sec&
    24.2 sec\\
    \hline
    Iterations&
     $190$ iter&
     $39$ iter&
     $25$ iter\\
     \hline
  \end{tabular}
\end{table}

\section{Conclusions}
We have presented an overlapping Schwarz algorithm for solving constrained quadratic programs. We show that convergence relies on an exponential decay of the sensitivity result, which we establish for the setting of interest. The behavior of the algorithm was demonstrated by using a large DC optimal power flow problem.  In future work we will study convergence in a nonlinear setting, and we will conduct more extensive benchmark studies.

\section*{Acknowledgments}
This material is based upon work supported by the U.S. Department of Energy, Office of Science, Office of Advanced Scientific Computing Research (ASCR) under Contract DE-AC02-06CH11347 and by NSF through award CNS-1545046. We also acknowledge partial support from the National Science Foundation under award NSF-EECS-1609183.

\bibliographystyle{IEEEtran}
\bibliography{Schwarz_QP}


\end{document}